\newtheorem{thm}{Theorem}[section]
\newtheorem{lem}[thm]{Lemma}
\newtheorem{prop}[thm]{Proposition}
\newtheorem{col}[thm]{Corollary}
\theoremstyle{definition}
\newtheorem{defn}[thm]{Definition}
\newtheorem{eg}[thm]{Example}
\newtheorem{rem}[thm]{Remark}
\DeclareMathOperator{\Int}{int}
\newcommand{\set}[1]{\left\{#1\right\}}
\newcommand{\B}[1]{\ensuremath{\mathbb{#1}}}
\newcommand{\N}{\B{N}}
\newcommand{\ra}{\rightarrow}
\renewcommand{\include}{\input}
\newcommand{\w}{\omega}
\newcommand{\nowt}{\emptyset}
\newcommand{\eps}{\varepsilon}
\newcommand{\Z}{\B Z}
\newcommand{\ol}{\overline}
\newcommand{\iity}{internal chain transitivity}
\newcommand{\pos}{weak incompressibility}
\newcommand{\nat}{\mathbb{N}}
\newcommand{\integ}{\mathbb{Z}}
\newcommand{\andysq}{\begin{flushright}$\square$\end{flushright}}
\newcommand{\cl}[1]{\overline{#1}}
\newcommand{\restr}[2]{#1\hspace{-0.15cm}\restriction_{#2}}
\newcommand{\ontop}[2]{\genfrac{}{}{0pt}{}{#1}{#2}}
\begin{document}

\title{Characterizations of $\omega$-Limit Sets in Topologically Hyperbolic Systems}

\author[A. D. Barwell]{Andrew D. Barwell}
\address[A. D. Barwell]{School of Mathematics, University of Bristol, Howard House, Queens Avenue, Bristol, BS8 1SN, UK -- and -- School of Mathematics, University of Birmingham, Birmingham, B15 2TT, UK}
\email[A. D. Barwell]{A.Barwell@bristol.ac.uk}
\author[C. Good]{Chris Good}
\address[C. Good]{School of Mathematics, University of Birmingham, Birmingham, B15 2TT, UK}
\email[C. Good]{cg@for.mat.bham.ac.uk}
\author[P. Oprocha]{Piotr Oprocha}
\address[P. Oprocha]{AGH University of Science and Technology, Faculty of Applied Mathematics, al. A. Mickiewicza 30, 30-059 Krak\'ow, Poland, -- and -- Institute of Mathematics, Polish Academy of Sciences, ul. \'Sniadeckich 8, 00-956 Warszawa, Poland}
\email{oprocha@agh.edu.pl}
\author[B. E. Raines]{Brian E. Raines}
\address[B. E. Raines]{Department of Mathematics, Baylor University, Waco, TX 76798--7328,USA}
\email{brian\_raines@baylor.edu}

\subjclass[2000]{37B25, 37B45, 37E05, 54F15, 54H20}
\keywords{omega-limit set, $\omega$-limit set, pseudo-orbit tracing property, shadowing, weak incompressibility, internal chain transitivity, expansivity, topologically hyperbolic}

\begin{abstract} %Let $f:X\to X$. A subset $\Lambda$ of $X$ is internally chain transitive if, for any $\eps>0$ and any $x,y\in \Lambda$, there is an $\eps$-pseudo-orbit of points in $\Lambda$ from $x$ to $y$.
It is well known that $\w$-limit sets are internally chain transitive and have weak incompressibility; the converse is not generally true, in either case. However, it has been shown that a set is weakly incompressible if and only if it is an abstract $\omega$-limit set, and separately that in shifts of finite type, a set is internally chain transitive if and only if it is a (regular) $\w$-limit set. In this paper we generalise these and other results, proving that the characterization for shifts of finite type holds in a variety of topologically hyperbolic systems (defined in terms of expansive and shadowing properties), and also show that the notions of internal chain transitivity and weak incompressibility coincide in compact metric spaces.
\end{abstract}

\maketitle
%\tableofcontents

%-----------------------------------------------------------------------------

\section{Introduction}\label{intro}
Let $X$ be a compact metric space and $f:X\to X$ be a continuous map. The \textit{$\w$-limit set} of a point $x\in X$ is the closed, (strongly) invariant set $\w(x,f)=\bigcap_{k=0}^\infty \overline{\{f^n(x): n\ge k\}}$. Such sets have been studied by many authors, and much is now known about their structure, particularly for maps of the interval. In \cite{Agronsky} it is shown that every closed nowhere dense subset and every finite union of closed subintervals of the unit interval can occur as an
$\w$-limit set for some continuous map. For a given map of the interval, on the other hand, the $\w$-limit sets are characterized in both \cite{Balibrea} and in \cite{Blokh} in terms of topological and dynamical properties. The topological structure of $\w$-limit sets is discussed in \cite{AH,BlockCoppel} and for specific
maps in \cite{BS,goodknightraines,goodrainessua}.

\textit{Internal chain transitivity} and \textit{internal chain recurrence} have applications in the study of economics, epidemiology, game theory, and mathematical biology (see the references and citations of \cite{Hirsch} for numerous examples). In \cite{Hirsch}, Hirsch \emph{et al} study internal chain transitivity in relation to repellors and uniform persistence. They prove that (compact) $\w$-limit sets are internally chain transitive and that the Butler-McGehee Lemma holds for internally chain transitive sets. This allows them to extend various results, which use this lemma, in the study of uniform persistence. The Butler-McGehee Lemma states that whenever an isolated, invariant set $M$ is a proper subset of an $\omega$-limit set $L$ there are points $u,v\subseteq L$ for which $\omega(u),\alpha(v)\in M$. It is
interesting to note that Butler-McGehee type properties have been used in the characterization of $\w$-limit sets appearing in \cite{Agronsky,Balibrea}.

Another property well known to hold in $\w$-limit sets is \textit{weak incompressibility}. This was first observed in \cite{Sarkovskii} by {\v{S}}arkovs'ki{\u\i}, who gave a proof in \cite{Sarkovskii2}. A proof also appears in \cite{BlockCoppel} and weak incompressibility has been mentioned in both \cite{Balibrea} and \cite{Blokh} in connection with characterizations of $\omega$-limit sets. In \cite{Bowen} it is shown that a homeomorphism $f:X\to X$ of a compact metric space is topologically conjugate to the action of a homeomorphism $g:Y\to Y$ on one of its $w$-limit sets ($f$ is an \textit{abstract $\omega$-limit set}) if and only if $X$ has weak incompressibility. In fact, as we show in Section \ref{WI}, {\pos} and {\iity} are equivalent in compact metric spaces.

\medskip

In \cite{Barwell}, symbolic dynamics are used to show that in shifts of finite type the $\w$-limit sets are precisely the internally chain transitive sets.  In \cite{Barwell2}, these symbolic arguments are extended to prove similar results for certain interval maps, in particular that for piecewise linear interval maps with gradient greater than 1, sets which do not contain the image of the critical point are $\w$-limit sets if and only if they are internally chain transitive. In the current paper, we use analytical arguments to isolate relevant properties of shifts of finite type, allowing us to generalise these results in the following theorem, which characterizes $\omega$-limit sets of \textit{topologically hyperbolic} maps (also known as \textit{topologically Anosov} maps), expanding maps, and maps with two types of \textit{pseudo-orbit shadowing}:

\begin{thm}\label{thm:main_ICT_omega}
Assume that $(X,d)$ is a compact metric space, $\Lambda\subseteq X$ is closed and that $f:X\rightarrow X$ is continuous. Assume also that one of the following properties hold:
\begin{enumerate}
	\item $f$ has limit shadowing on $\Lambda$;\label{ICT_omega_cond1}
	\item $f$ is topologically hyperbolic;\label{ICT_omega_cond2}
	\item $f$ is expanding on $\Lambda$ and open on $\Lambda$;\label{ICT_omega_cond4}
	%\item $X$ is a compact interval and $f$ is expanding on $\Lambda$;\label{ICT_omega_cond5}
	\item $f$ has h-shadowing on $\Lambda$ and is open on a neighbourhood of $\Lambda$.\label{ICT_omega_cond6}
\end{enumerate}
Then for any closed subset $A\subset\Lambda$ the following are equivalent:
\begin{enumerate}
    \item[(a)] $A$ has weak incompressibility;
    \item[(b)] $A$ is internally chain transitive;
    \item[(c)] $A=\omega(x_A,f)$ for some $x_A\in X$.
\end{enumerate}
\end{thm}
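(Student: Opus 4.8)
The plan is to prove the two implications (c) $\Rightarrow$ (b) and (b) $\Rightarrow$ (c), and to obtain (a) $\Leftrightarrow$ (b) for free. Once the result of Section~\ref{WI} is available, \pos{} and \iity{} coincide in every compact metric space, so (a) and (b) are equivalent with no reference at all to the four special hypotheses on $f$. Likewise (c) $\Rightarrow$ (b) is the classical fact that an $\omega$-limit set of a continuous self-map of a compact metric space is always \ii{} (see \cite{Hirsch}), which again uses nothing beyond continuity. Thus the whole content of Theorem~\ref{thm:main_ICT_omega} is the implication (b) $\Rightarrow$ (c) under each of conditions (\ref{ICT_omega_cond1})--(\ref{ICT_omega_cond6}).

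For (b) $\Rightarrow$ (c) I would first reduce the four cases to one. The aim is to show that each of (\ref{ICT_omega_cond2}), (\ref{ICT_omega_cond4}), (\ref{ICT_omega_cond6}) forces $f$ to have limit shadowing on $\Lambda$ --- or, what is already enough, that every \emph{asymptotic} pseudo-orbit contained in $\Lambda$ is asymptotically shadowed by a genuine orbit of $f$. For (\ref{ICT_omega_cond2}) this is the (known) fact that a system which is expansive and has shadowing has limit shadowing; for (\ref{ICT_omega_cond4}) one uses the strong contraction of the inverse branches of an expanding open map, and for (\ref{ICT_omega_cond6}) one extracts an infinite asymptotic shadowing orbit, by a compactness/diagonal argument, from the exact-endpoint condition built into h-shadowing. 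In the two non-invertible cases the hypothesis that $f$ is open is exactly what keeps the inductive construction of the shadowing orbit from getting stuck. Granting this reduction, it remains to prove: \emph{if $f$ has limit shadowing on $\Lambda$ and $A\subset\Lambda$ is \ii{}, then $A=\omega(x_A,f)$ for some $x_A\in X$.}

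To do this, fix a countable dense subset $\{a_1,a_2,\dots\}$ of $A$ and a sequence $\eps_k\downarrow 0$. Using only that $A$ is \ii{} (so $f(A)=A$ and any two points of $A$ are joined by an $\eps_k$-chain lying inside $A$), build for each $k$ a finite pseudo-orbit in $A$, with every error below $\eps_k$, that starts at $a_1$, then visits $a_2,\dots,a_k$ in turn, and returns to $a_1$; concatenating these blocks over $k=1,2,\dots$ produces a single sequence $(y_i)_{i\ge 0}\subset A$ with $d(f(y_i),y_{i+1})\to 0$, that is, an asymptotic pseudo-orbit, in which each $a_j$ occurs infinitely often. Since $\{a_j\}$ is dense in $A$ and $A$ is closed, the set of cluster points of $(y_i)$ is precisely $A$. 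By limit shadowing there is $x_A\in X$ with $d(f^i(x_A),y_i)\to 0$, so the orbit of $x_A$ and the sequence $(y_i)$ have the same cluster points; hence $\omega(x_A,f)=A$, as required. This portion is essentially the symbolic argument of \cite{Barwell} transcribed into the metric setting.

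The pseudo-orbit construction and the final cluster-set identification are routine. The main obstacle is the reduction step: verifying that a topologically hyperbolic map has limit shadowing with the correct uniform control of the constants, and, in the non-invertible cases (\ref{ICT_omega_cond4}) and (\ref{ICT_omega_cond6}), showing that openness together with expansion, respectively h-shadowing, really does deliver an \emph{infinite} orbit asymptotic to a prescribed asymptotic pseudo-orbit. This is where the hypotheses genuinely do work and where the estimates must be tracked with care.
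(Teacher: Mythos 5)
Your overall architecture is right: (a)$\Leftrightarrow$(b) is Theorem \ref{lem:WI=ICT} and needs none of the hypotheses on $f$; (c)$\Rightarrow$(b) is the Hirsch \emph{et al.} result; and your treatment of (b)$\Rightarrow$(c) under hypothesis (\ref{ICT_omega_cond1}) (build an asymptotic pseudo-orbit inside $A$ whose cluster set is $A$ by concatenating chains through a dense sequence, then apply limit shadowing) is exactly the paper's argument, as is the reduction of (\ref{ICT_omega_cond2}) to (\ref{ICT_omega_cond1}) via ``$c$-expansive $+$ shadowing $\Rightarrow$ (s-)limit shadowing'' (Theorem \ref{thm:LimS}(2) together with Lemma \ref{s-limit_and_limit_shad}, using invariance of $A$ from Proposition \ref{prop:ICTinv}).

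The genuine gap is in your reduction of cases (\ref{ICT_omega_cond4}) and (\ref{ICT_omega_cond6}) to limit shadowing. You assert that h-shadowing on $\Lambda$ plus openness yields asymptotic shadowing of asymptotic pseudo-orbits ``by a compactness/diagonal argument,'' but this is precisely the step that fails as described: a diagonal limit of points that $\eps$-shadow longer and longer initial segments gives at best $\eps$-shadowing of the whole sequence (i.e.\ ordinary shadowing), and the asymptotic approach does not pass to the limit --- this is exactly why shadowing does not imply limit shadowing in general. If instead you try to concatenate h-shadowed blocks, you hit two concrete obstructions: h-shadowing gives an exact hit at the \emph{end} of a block, not at its start, so forward concatenation is blocked; and the backward fix (replace the last point of a block by the shadowing point of the next block) produces a pseudo-orbit whose final point need not lie in $\Lambda$, so the hypothesis ``h-shadowing \emph{on} $\Lambda$'' no longer applies. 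The paper avoids this entirely: for (\ref{ICT_omega_cond4}) and (\ref{ICT_omega_cond6}) it does not prove limit shadowing at all, but uses openness plus h-shadowing to show $A$ is \emph{dynamically indecomposable} (Lemma \ref{CINEP2}: nested regularly closed sets $J_i$ with $J_{i+1}\subseteq f(J_i)$, where openness is what puts the exactly-hit endpoint in $\Int(J_m)$), and then extracts $x_A$ with $\omega(x_A,f)=A$ from a nested intersection (Lemma \ref{thm:idecomp}); case (\ref{ICT_omega_cond4}) is first reduced to (\ref{ICT_omega_cond6}) via Corollary \ref{col:uniexp_open_hshad}. To make your route work you would have to actually prove that h-shadowing together with openness implies (some form of) limit shadowing on $\Lambda$, with the concatenation and containment-in-$\Lambda$ issues resolved; as written, the key claim is only asserted, and the sketched mechanism does not deliver it.
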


Theorem \ref{thm:main_ICT_omega} is proved in Section \ref{sec:limsets} and generalizes results of \cite{Barwell2} and \cite{Barwell}, as is shown in the following corollary:

\begin{col}\label{cor:equiv_tent} Suppose that $f:X\to X$, that $\Lambda$ is a closed subset of $X$ and that either:
\begin{enumerate}
  \item $f$ is a uniformly piecewise linear interval map on $[0,1]$ that takes values 0 or 1 at local extrema;
  \item $f$ is a piecewise linear interval map with gradient modulus greater than 1 and finitely many pieces, and that $\Lambda$ does not contain the image of any critical point;
  \item $f$ is a shift of finite type.
\end{enumerate}
For any closed subset $Y\subseteq\Lambda$ the following are equivalent:
\begin{enumerate}
    \item $Y$ has weak incompressibility;\label{fulltentchar:c1}
    \item $Y$ is internally chain transitive;\label{fulltentchar:c2}
    \item $Y=\omega(x_Y,f)$ for some $x_Y\in X$.\label{fulltentchar:c3}
\end{enumerate}
\end{col}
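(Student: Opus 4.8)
The plan is to deduce Corollary~\ref{cor:equiv_tent} from Theorem~\ref{thm:main_ICT_omega}: in each of the three cases we verify that the pair $(f,\Lambda)$ satisfies one of the hypotheses (\ref{ICT_omega_cond1})--(\ref{ICT_omega_cond6}), and the equivalence of (\ref{fulltentchar:c1})--(\ref{fulltentchar:c3}) then follows at once. Two observations streamline this. First, the three conclusions are insensitive to the choice of a compatible metric on $X$: being an $\omega$-limit set is purely topological, while weak incompressibility and internal chain transitivity are defined through arbitrarily fine $\eps$-chains and are therefore preserved under uniformly equivalent metrics, and on a compact space any two compatible metrics are uniformly equivalent; so we may freely re-metrize $X$ when checking a hypothesis. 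Second, if $f$ enjoys limit shadowing, or h-shadowing, on all of $X$, then it enjoys the same property on every closed $\Lambda\subseteq X$, and openness of $f$ on $X$ is openness on a neighbourhood of $\Lambda$; so in the two interval cases it is enough to argue globally on $[0,1]$.

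If $f$ is a shift of finite type, then $f$ is open --- on a one-sided shift the image of a basic cylinder is a basic cylinder, and a two-sided shift is a homeomorphism --- and $f$ has the h-shadowing property (stitch together compatible prefix blocks of the given pseudo-orbit and take the last point to be $x_m$ itself). Hence hypothesis~(\ref{ICT_omega_cond6}) holds; alternatively, a two-sided shift of finite type is an expansive homeomorphism with shadowing, hence topologically hyperbolic, so (\ref{ICT_omega_cond2}) applies. Either way Theorem~\ref{thm:main_ICT_omega} gives the conclusion, recovering the characterization of \cite{Barwell}.

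If $f$ is uniformly piecewise linear with every local extreme value in $\{0,1\}$, observe that the endpoints $0$ and $1$ are one-sided local extrema, so the image under $f$ of each lap, and of each small neighbourhood of a turning point or of an endpoint, is a subinterval having $0$ or $1$ as one of its endpoints; thus $f$ is an open self-map of $[0,1]$. Moreover $f$ has h-shadowing on $[0,1]$: pulling a finite pseudo-orbit back along branches of $f$, the common slope-modulus $s>1$ contracts the accumulated error, and the $\{0,1\}$ condition guarantees that whenever the pseudo-orbit passes near a turning point the next point lies on the side of the corresponding critical value from which preimages near that turning point exist, so no obstruction to choosing branches arises and the last point is attained exactly. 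Hence hypothesis~(\ref{ICT_omega_cond6}) holds. Finally, if $f$ is piecewise linear with finitely many laps and $\lambda:=\inf|f'|>1$ and $\Lambda$ misses the finite set $f(\crit f)$ of critical values, let $(x_n)\subseteq\Lambda$ be an asymptotic pseudo-orbit. For all large $n$, $f(x_n)$ lies within $\tfrac12\dist(\Lambda,f(\crit f))$ of $x_{n+1}\in\Lambda$ and so is bounded away from every critical value, which by uniform continuity of $f$ keeps $x_n$ itself bounded away from every turning point; near each such $x_n$ the map $f$ is an affine homeomorphism expanding distances by at least $\lambda$. Pulling the tail of $(x_n)$ back branch by branch, with errors decaying geometrically, and passing to a limit produces a genuine orbit asymptotic to $(x_n)$ --- this is the analytic counterpart of the symbolic argument of \cite{Barwell2} --- so $f$ has limit shadowing on $\Lambda$ and hypothesis~(\ref{ICT_omega_cond1}) holds. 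In every case Theorem~\ref{thm:main_ICT_omega} applies and the corollary follows.

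The main obstacle is the last case: turning the heuristic ``$\Lambda$ misses the critical values $\Rightarrow$ pseudo-orbits in $\Lambda$ eventually avoid the turning points $\Rightarrow$ well-defined, uniformly contracting inverse branches'' into a clean proof of limit shadowing on $\Lambda$, while keeping proper account of the endpoints $0$ and $1$, of the finitely many laps meeting $\Lambda$, and of the fact that the shadowing orbit need not lie in $\Lambda$. The shift case is routine, and the first interval case hinges on the fact (to be verified) that uniformly piecewise linear maps with extreme values in $\{0,1\}$ have the h-shadowing property.
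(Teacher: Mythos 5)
Your overall strategy---verify a hypothesis of Theorem \ref{thm:main_ICT_omega} in each case and quote the theorem---is exactly the paper's. For cases (1) and (3) your route essentially matches it: the paper also invokes hypothesis (\ref{ICT_omega_cond6}) (h-shadowing plus openness), except that for (1) it cites \cite{Chen} for shadowing and upgrades to h-shadowing by the reverse-tracking argument of Lemma \ref{lem:reverse_tracking}/Example \ref{eg:equiv_tent}, whereas you assert the upgrade directly (``no obstruction to choosing branches arises''); that assertion is precisely the content of a generalized Lemma \ref{lem:reverse_tracking} and needs the same case analysis at turning points, so as written it is a promissory note rather than a proof, though the idea is right. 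For shifts of finite type your alternative appeal to topological hyperbolicity (hypothesis (\ref{ICT_omega_cond2})) is fine and, for two-sided shifts, arguably cleaner than the paper's use of hypothesis (\ref{ICT_omega_cond4}).

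The genuine gap is in case (2), which is also the one place you depart from the paper. The paper disposes of it in one line via hypothesis (\ref{ICT_omega_cond4}): because $\Lambda$ is closed and misses the (finitely many) critical values, the relevant sets stay a definite distance from the turning points, so $f$ is expanding on $\Lambda$ and open on $\Lambda$, and Corollary \ref{col:uniexp_open_hshad} together with Lemmas \ref{CINEP2} and \ref{thm:idecomp} (already proved in the paper) does all the work. You instead set out to prove limit shadowing on $\Lambda$ (hypothesis (\ref{ICT_omega_cond1})) by an inverse-branch pull-back of an asymptotic pseudo-orbit, and the decisive steps are exactly the ones you leave open: (i) the pull-back ``with errors decaying geometrically, passing to a limit'' is a nontrivial limit construction (essentially re-proving a local version of Theorem \ref{thm:LimS}(2) by hand) and is not carried out; (ii) the endpoint problem you flag is real---if $x_n$ lies in a lap abutting $0$ or $1$, the point to be pulled back may fall outside the image of that lap, since $f(0),f(1)$ need not be critical values and your ``bounded away from turning points'' estimate does not control this; (iii) your avoidance of turning points holds only for all large $n$, so at best you shadow a tail of the pseudo-orbit, which suffices for the $\omega$-limit conclusion but not for the claim ``$f$ has limit shadowing on $\Lambda$'' as stated. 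All three issues evaporate if you follow the paper and feed case (2) into hypothesis (\ref{ICT_omega_cond4}) (applied, if you wish, to the invariant internally chain transitive set $Y$ itself, which misses the critical points because it misses their images), so I would recommend replacing your limit-shadowing argument by that observation.
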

Note that (2) applies to \textit{tent maps} with gradient between 1 and 2, and that (1) applies to the tent map with gradient equal to 2.

\medskip

Much of the work in this paper relates to shadowing of pseudo-orbits, a notion used by Bowen \cite{Bowen} to study $\omega$-limit sets of a class of diffeomorphism, whilst Coven, Kan and Yorke \cite{Coven} show that shadowing is present in certain maps with specific expansive properties. Similar properties are also at the heart of characterizations of $\omega$-limit sets for interval maps found in \cite{Blokh} and \cite{Balibrea}. Unsurprisingly, it turns out that there is a strong link between pseudo-orbit shadowing and maps for which {\iity} characterizes $\omega$-limit sets; points in an internally chain transitive set $\Lambda$ can be linked by pseudo-orbits contained in $\Lambda$ and certain shadowing properties allow us to find an actual orbit that shadows these pseudo-orbits closely enough for its $\w$-limit set to be $\Lambda$. In Section \ref{shad}, we discuss the various notions of shadowing that we use in Theorem \ref{thm:main_ICT_omega} to make this idea precise. We also consider analytic and topological notions of expansivity that impact on, and in certain cases imply, shadowing properties. In Section \ref{WI}, we introduce some basic definitions and results which relate to $\omega$-limit sets, and in Section \ref{sec:limsets} we prove Theorem \ref{thm:main_ICT_omega}, ending with some examples that illustrate the theory.

%------------------------------------------------------------------------------------

\section{Weak Incompressibility, Internal Chain Transitivity and Attractors}\label{WI}

In this paper, unless stated otherwise, $X$ is a compact metric space and $f:X\rightarrow X$ is continuous. Recall that if $\{x_n:n\ge 0\}$ is a sequence of points in $X$, the $\w$-\emph{limit set} of $\{x_n:0\le n\}$ is the set
\[
  \w(\{x_n\}_{n\geq 0})=\bigcap_{k=0}^\infty \overline{\{x_n: n\ge k\}}.
\]
In particular, the $\w$-limit set of a point $x\in X$ is the set $\w(x,f)=\w(\{f^n(x)\}_{n\ge 0})$. We often write $\w(x)$ for $\w(x,f)$ if the context is clear.

The (finite or infinite) sequence $\{x_0,x_1,\ldots\}\subseteq X$ is an  $\eps$-\emph{pseudo-orbit}, for some $\eps>0$, if and only if $d(f(x_n),x_{n+1})<\eps$, for all $n\ge 0$. The (infinite) sequence is an \emph{asymptotic pseudo-orbit} if $d(f(x_n),x_{n+1}) \to 0$ as $n\to \infty$ and is an \emph{asymptotic} $\eps$-\emph{pseudo-orbit} if both conditions hold.

In this section we investigate the following two dynamical properties of $\omega$-limit sets, and show that under certain conditions they describe identical behaviour.

The set $\Lambda\subseteq X$ is \textit{internally chain transitive} (or alternatively $f$ is \textit{internally chain transitive on $\Lambda$}) if for every pair of points $x,y\in \Lambda$ and every $\varepsilon>0$ there is an $\varepsilon$-pseudo-orbit $\{x_0=x,x_1,\ldots,x_m=y\}\subseteq \Lambda$ between $x$ and $y$ {of length $m+1>1$}. In the special case $\Lambda=X$, we say that $f$ (or $X$) is \textit{chain transitive}. If the above only holds whenever $x=y$ (and $m>0$), we say that $\Lambda$ (or $f$) is \emph{internally chain recurrent}. A set $\Lambda\subseteq X$  is \textit{weakly incompressible} (or has \textit{weak incompressibility}) if $M\cap \cl{f(\Lambda\setminus M)}\neq\emptyset$ whenever $M$ is a nonempty, closed, proper subset of $\Lambda$.

Clearly $\Lambda$ is weakly incompressible if and only if $\cl{f(U)}\cap (\Lambda\setminus U)\neq\emptyset$ for any proper, nonempty subset $U\subseteq \Lambda$ which is open in $\Lambda$. {\v{S}}arkovs'ki{\u\i} states in  \cite{Sarkovskii} that $\omega$-limit sets have weak incompressibility (without naming the property). This is probably where this property appears for the first time (see also \cite{Blokh}). Bowen \cite{Bowen} proves that any weakly incompressible set for a homeomorphism is always the $\omega$-limit set of some conjugate map, and a proof that $\w$-limit sets are weakly incompressible for all maps on compact spaces appears in Chapter~VI (Lemma 3) of \cite{BlockCoppel} (the converse of this is false, as we will see below).  We adopt the name weak incompressibility from \cite{Balibrea}.

We say that $\Lambda$ is \textit{invariant} provided $f(\Lambda)=\Lambda$ (some authors use the term \textit{strongly invariant}). The following condition is well known to be true for chain-recurrent sets; the proof in our context is very similar, and is omitted.

\begin{prop}\label{prop:ICTinv}
Let $(X,d)$ be a compact metric space, and $f:X\rightarrow X$ be continuous. If $\Lambda$ is a closed, internally chain transitive subset of $X$, then $\Lambda$ is invariant.
\end{prop}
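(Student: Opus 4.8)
The plan is to establish the two inclusions $f(\Lambda)\subseteq\Lambda$ and $\Lambda\subseteq f(\Lambda)$ separately, in each case by reading off information from, respectively, the first and the last link of a \emph{closed} pseudo-orbit. The first observation is that, applying the definition of internal chain transitivity with $x=y$, the set $\Lambda$ is in particular internally chain recurrent: for every $x\in\Lambda$ and every $\eps>0$ there is an $\eps$-pseudo-orbit $\{x=x_0,x_1,\dots,x_m=x\}\subseteq\Lambda$ with $m\ge 1$.

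For the inclusion $f(\Lambda)\subseteq\Lambda$, I would fix $x\in\Lambda$ and $\eps>0$ and take a closed $\eps$-pseudo-orbit based at $x$ as above. Its first link gives $d(f(x),x_1)<\eps$ with $x_1\in\Lambda$, so $\dist(f(x),\Lambda)<\eps$; since $\eps>0$ was arbitrary and $\Lambda$ is closed, $f(x)\in\Lambda$. For the inclusion $\Lambda\subseteq f(\Lambda)$, I would first note that $\Lambda$, being a closed subset of the compact space $X$, is compact, so that $f(\Lambda)$ is compact and hence closed. Then, fixing $y\in\Lambda$ and $\eps>0$ and taking a closed $\eps$-pseudo-orbit $\{y=x_0,\dots,x_m=y\}\subseteq\Lambda$ with $m\ge 1$, its last link gives $d(f(x_{m-1}),y)<\eps$ with $x_{m-1}\in\Lambda$, whence $\dist(y,f(\Lambda))<\eps$. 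Letting $\eps\to 0$ and using that $f(\Lambda)$ is closed gives $y\in f(\Lambda)$. Combining the two inclusions yields $f(\Lambda)=\Lambda$.

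The argument is short and presents no genuine obstacle; the one point that needs a little care is that the definition of internal chain transitivity must be used in the form producing pseudo-orbits of length $m+1>1$, so that the points $x_1$ and $x_{m-1}$ invoked above actually exist (when $m=1$ they coincide with the base point, which causes no problem). This is essentially the standard argument for chain-recurrent sets, which is why it is reasonable to omit the details.
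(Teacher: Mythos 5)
Your proof is correct, and it is precisely the standard argument the paper has in mind: the paper omits the proof, noting it is "very similar" to the well-known one for chain-recurrent sets, and your two-inclusion argument (reading $f(x)\in\Lambda$ off the first link and $y\in f(\Lambda)$ off the last link of a closed $\eps$-pseudo-orbit, using closedness of $\Lambda$ and compactness of $f(\Lambda)$) is exactly that argument. Nothing further is needed.
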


Hirsch \emph{et al} \cite{Hirsch} show that $\omega$-limit sets are internally chain transitive. In the next result we show that, for compact sets, weak incompressibility is equivalent to internal chain transitivity.

\begin{thm}\label{lem:WI=ICT}
Let $(X,d)$ be a compact metric space, $f:X\rightarrow X$ be continuous and let $\Lambda$ be a closed, nonempty subset of $X$. The following are equivalent:
 \begin{enumerate}
   \item $\Lambda$ is internally chain transitive,\label{lem:WI=ICT:1}
   \item $\Lambda$ is weakly incompressible.\label{lem:WI=ICT:2}
\end{enumerate}
\end{thm}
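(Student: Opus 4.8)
The plan is to prove both implications directly from the definitions, exploiting compactness at the key points. For the implication \eqref{lem:WI=ICT:1} $\Rightarrow$ \eqref{lem:WI=ICT:2}, I would argue by contraposition: suppose $\Lambda$ is \emph{not} weakly incompressible, so there is a nonempty, closed, proper subset $M\subsetneq\Lambda$ with $M\cap\cl{f(\Lambda\setminus M)}=\emptyset$. Since $\Lambda$ is compact and both $M$ and $\cl{f(\Lambda\setminus M)}$ are closed and disjoint, there is some $\delta>0$ with $d(M,\cl{f(\Lambda\setminus M)})>\delta$. I claim no $\delta$-pseudo-orbit in $\Lambda$ can pass from a point of $\Lambda\setminus M$ into $M$: if $x_n\in\Lambda\setminus M$, then $f(x_n)\in f(\Lambda\setminus M)$, so $d(f(x_n),z)>\delta$ for every $z\in M$, forcing $x_{n+1}\notin M$. (One must also note $M\neq\Lambda$ so $\Lambda\setminus M\neq\emptyset$, and pick $x\in\Lambda\setminus M$, $y\in M$; the existence of such a $y$ uses $M\neq\emptyset$.) Hence no $\delta$-pseudo-orbit in $\Lambda$ joins such an $x$ to such a $y$, so $\Lambda$ is not internally chain transitive.

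For the converse \eqref{lem:WI=ICT:2} $\Rightarrow$ \eqref{lem:WI=ICT:1}, assume $\Lambda$ is weakly incompressible and fix $\eps>0$ and $x,y\in\Lambda$. Define $M$ to be the set of all points $z\in\Lambda$ that \emph{cannot} be reached from $x$ by an $\eps$-pseudo-orbit in $\Lambda$ of length $>1$; the goal is to show $M$ does not contain $y$, and the natural route is to show $M$ is a closed proper subset of $\Lambda$ that violates weak incompressibility unless $M=\emptyset$. First I would check that the complement $U=\Lambda\setminus M$ (the ``reachable'' points) is open in $\Lambda$: if $z$ is reachable via an $\eps$-pseudo-orbit ending $\ldots,x_{m-1},z$, then by continuity of $f$ and the strict inequality $d(f(x_{m-1}),z)<\eps$, every $z'$ in a small ball around $z$ in $\Lambda$ is also reachable (we can replace the last entry, or append one more step). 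So $M$ is closed in $\Lambda$. Also $U\neq\emptyset$: using Proposition~\ref{prop:ICTinv}'s underlying idea is not available yet, but directly, since $\Lambda$ is weakly incompressible it has no isolated points unless it is a single periodic orbit — cleaner is to note that $f(x)\in\cl{f(\Lambda)}$; we want a point of $\Lambda$ within $\eps$ of $f(x)$. Here I would first establish that weak incompressibility forces $f(\Lambda)$ to be dense in $\Lambda$ (if not, $M_0=\Lambda\setminus\cl{f(\Lambda)}$ is a nonempty relatively open set with $\cl{f(\Lambda\setminus M_0)}\subseteq\cl{f(\Lambda)}$ disjoint from $M_0$, contradicting weak incompressibility in the open-set formulation). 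Hence some point of $\Lambda$ lies within $\eps$ of $f(x)$, so $U\neq\emptyset$, i.e. $M\subsetneq\Lambda$.

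Now suppose for contradiction $M\neq\emptyset$. Then $M$ is a nonempty, closed, proper subset of $\Lambda$, so by weak incompressibility $M\cap\cl{f(\Lambda\setminus M)}\neq\emptyset$; pick $w$ in this intersection. Then there is a point $u\in\Lambda\setminus M=U$ with $d(f(u),w)<\eps$. But $u$ is reachable from $x$ by an $\eps$-pseudo-orbit in $\Lambda$, and appending $w$ to the end of it (legitimate since $d(f(u),w)<\eps$ and $w\in\Lambda$) produces an $\eps$-pseudo-orbit in $\Lambda$ from $x$ to $w$, so $w\in U$, contradicting $w\in M$. Therefore $M=\emptyset$, so every point of $\Lambda$ — in particular $y$ — is reachable from $x$ by an $\eps$-pseudo-orbit in $\Lambda$ of length $>1$, and $\Lambda$ is internally chain transitive.

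I expect the main obstacle to be the technical bookkeeping around the ``length $>1$'' requirement and the openness of $U$ in $\Lambda$: one must be careful that the starting point $x$ itself counts as reachable (via the trivial extension to a length-$2$ pseudo-orbit, using that $f(x)$ is $\eps$-close to some point of $\Lambda$ by density of $f(\Lambda)$), and that the density-of-$f(\Lambda)$ lemma is correctly extracted from weak incompressibility; everything else is a routine $\eps$-argument using compactness of $\Lambda$.
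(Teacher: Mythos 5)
Your argument for the implication from internal chain transitivity to weak incompressibility (by contraposition, separating the disjoint compact sets $M$ and $\cl{f(\Lambda\setminus M)}$ by a positive distance $\delta$ and observing that no $\delta$-pseudo-orbit in $\Lambda$ can cross from $\Lambda\setminus M$ into $M$) is correct, and your reachable-set strategy for the converse is a genuinely different route from the paper's, which instead takes a minimal finite cover of $\Lambda$ by $\eps/2$-balls and applies weak incompressibility to unions of cover elements to build a chain of cover elements from which the pseudo-orbit is read off. However, your converse has a genuine gap at the step ``$U\neq\emptyset$''. What you actually need there is a point of $\Lambda$ within $\eps$ of $f(x)$; the lemma you invoke, that $f(\Lambda)$ is dense in $\Lambda$, is the inclusion $\Lambda\subseteq\cl{f(\Lambda)}$, which says nothing about whether $f(x)$ is near $\Lambda$ ($\Lambda$ is not assumed invariant, so a priori $f(x)$ could be far from $\Lambda$ even though $\cl{f(\Lambda)}\supseteq\Lambda$). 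Moreover, your justification of that lemma misapplies weak incompressibility: for the relatively open set $M_0=\Lambda\setminus\cl{f(\Lambda)}$ the open-set formulation demands $\cl{f(M_0)}\cap(\Lambda\setminus M_0)\neq\emptyset$, whereas you use the emptiness of $M_0\cap\cl{f(\Lambda\setminus M_0)}$, which is the closed-set condition applied to a set that is not closed, so no contradiction is obtained. (Density is in fact true when $\Lambda$ has at least two points -- apply the definition with $M=\{z\}$ -- but it is not the fact you need.)

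The gap is repairable, at the cost of excluding singletons: if $\Lambda$ has at least two points, fix $x\in\Lambda$ and, for all large $n$, apply weak incompressibility to the nonempty, closed, proper set $M_n=\Lambda\setminus B_{1/n}(x)$; this yields $y_n\in M_n$ and $w_n\in B_{1/n}(x)\cap\Lambda$ with $d(f(w_n),y_n)<1/n$, and continuity of $f$ together with closedness of $\Lambda$ gives $f(x)\in\Lambda$, so the two-term pseudo-orbit $\{x,f(x)\}$ already witnesses $U\neq\emptyset$. With that in place, the remainder of your argument is correct: $U$ is open in $\Lambda$ by the strict inequality in the last step, and if $M\neq\emptyset$ then a point $w\in M\cap\cl{f(U)}$ together with some $u\in U$ satisfying $d(f(u),w)<\eps$ lets you append $w$ to a pseudo-orbit ending at $u$, contradicting $w\in M$. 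Do note that the singleton case is a genuine boundary case for the statement itself: if $\Lambda=\{p\}$ with $f(p)\neq p$, then $\Lambda$ is vacuously weakly incompressible but not internally chain transitive; the paper's own proof implicitly assumes $f(x)$ lies in some element of its cover of $\Lambda$ at the corresponding point, so your write-up should either restrict to $|\Lambda|\geq 2$ or flag this assumption explicitly.
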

\begin{proof}
To see that \eqref{lem:WI=ICT:2} implies \eqref{lem:WI=ICT:1}, let $\Lambda$ be weakly incompressible. If $U$ is a proper nonempty open subset of $\Lambda$, let $F(U)=\cl{f(U)}\setminus U$. Since $\Lambda$ is weakly incompressible, $F(U)$ is always nonempty.

Suppose that $x$ and $y$ are in $\Lambda$ and that $\varepsilon>0$. Let ${\mathcal C}$ be a finite cover of $\Lambda$ by $\varepsilon/2$-neighbourhoods of points in $\Lambda$ with no proper subcover, and let ${\mathcal B}=\{C\cap \Lambda\ :\ C\in{\mathcal C}\}$.

If $B_1\in{\mathcal B}$, then unless $B_1=\Lambda$, $F(B_1)\neq\emptyset$, and there is some $B_2\in{\mathcal B}$ such that $B_2\cap\cl{f(B_1)}\neq\emptyset$, hence $B_2\cap f(B_1)\neq \emptyset$. Suppose that we have chosen $B_j\in{\mathcal B}$, $j\leq k$, so that for each $j$ there is some $i\leq j$ such that $B_j\cap f(B_i)\neq\emptyset$. Unless $B_1\cup\ldots\cup B_k=\Lambda$, $F(B_1\cup\ldots\cup B_k)\neq\emptyset$, so there is some $B_{k+1}\in{\mathcal B}$ such that $B_{k+1}\cap f(B_1\cup\ldots\cup B_k)\neq\emptyset$, from which it follows that $B_{k+1}\cap f(B_j)\neq\emptyset$ for some $j<k+1$. Since ${\mathcal B}$ is a minimal finite cover, it follows that for any $B,B'\in {\mathcal B}$ we can construct a sequence $B=B_1,B_2,\ldots,B_n=B'$ such that $B_{j+1}\cap f(B_j)\neq\emptyset$ for each $j<n$.

Now suppose that $x=x_0$, $f(x)\in B$ and $y\in B'$ for some $B,\ B'\in{\mathcal B}$. Then we can construct a sequence $B_1=B,\ldots,B_n=B'$ as above. For $j=1,\ldots, n-1$ choose any $x_j \in B_j\cap f^{-1}(B_{j+1})$, and put $x_n=y$. Then $x_0,\ldots,x_n$ is an $\varepsilon$-pseudo-orbit from $x$ to $y$.

To prove that \eqref{lem:WI=ICT:1} implies \eqref{lem:WI=ICT:2}, assume that $\Lambda$ is internally chain transitive, and suppose that $M$ is a proper, nonempty closed subset of $\Lambda$. Pick $y\in M$ and $x\in \Lambda\setminus M$. For each $n\in\nat$, there is a $1/2^n$-pseudo-orbit from $x$ to $y$. Some $z_n\in \Lambda\setminus M$ is the last point in the pseudo-orbit that is not in $M$, and thus is such that $d(f(z_n),M)<1/2^n$. Since $\Lambda$ is compact, without loss of generality we may assume that $z_n\rightarrow z$ which implies that $f(z)\in M\cap\cl{f(\Lambda\setminus M)}\neq \emptyset$.
\end{proof}

Hirsch \emph{et al} also characterize internal chain transitivity in terms of attractors and in terms of asymptotic pseudo-orbits. The definition they use is that the closed, nonempty invariant set $\Lambda$ is an attractor provided there exists an open set $U\supset\Lambda$ such that $\lim_{n\ra \infty} \sup_{x \in U} d( f^n(x), \Lambda)=0$. This is easily be shown to be equivalent to the following notion, which is closely related to weak incompressibility: the closed, nonempty invariant set $\Lambda$ is said to be an \emph{attractor} if and only if there is an open set $U\supset \Lambda$ such that
\begin{enumerate}
\item $\cl{f(U)}\subseteq U$,
\item $\omega(x,f)\subseteq \Lambda$ for every $x\in U$.
\end{enumerate}
Such sets are also said to be \emph{asymptotically stable in the sense of Lyapunov} \cite{BlockCoppel,Milnor} (we note that there are also a number of other concepts which reflect the idea of attraction).

It is an immediate consequence of Proposition \ref{prop:ICTinv} and Theorem \ref{lem:WI=ICT} that closed, weakly incompressible sets are invariant, so that together with \cite[Lemmas~2.3, 3.1 \& 3.2]{Hirsch}, we have the following.

\begin{col}\label{WI=ICT}
Let $(X,d)$ be a compact metric space, $f:X\rightarrow X$ be continuous
and let $\Lambda$ be a closed, nonempty subset of $X$. The following are
equivalent:
 \begin{enumerate}
   \item $\Lambda$ is internally chain transitive;
   \item $\Lambda$ is weakly incompressible;
   \item $\Lambda$ is invariant and no proper subset of $A$ is an attractor for
   $\restr{f}{\Lambda}$;
   \item $\Lambda$ is the $\w$-limit set of some asymptotic pseudo-orbit
   of $f$ in $X$.
\end{enumerate}
\end{col}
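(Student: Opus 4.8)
The plan is to assemble Corollary~\ref{WI=ICT} by combining Theorem~\ref{lem:WI=ICT}, which already gives the equivalence of (1) and (2), with the three cited lemmas from \cite{Hirsch}, checking only that the hypotheses of those lemmas are met in our setting. The subtle point is that \cite[Lemmas~2.3, 3.1 \& 3.2]{Hirsch} are stated for closed \emph{invariant} sets, whereas our conditions (1), (2), (4) do not mention invariance; so the first thing I would do is record that, by Proposition~\ref{prop:ICTinv} together with Theorem~\ref{lem:WI=ICT}, every closed weakly incompressible (equivalently, internally chain transitive) subset of $X$ is automatically invariant. This lets us move freely between the a priori invariance-free conditions and the invariance-laden formulations in \cite{Hirsch}.

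Next I would run through the cycle of implications. For (1)$\Leftrightarrow$(2) I cite Theorem~\ref{lem:WI=ICT} directly. For (1)$\Rightarrow$(3): assuming $\Lambda$ internally chain transitive, it is invariant by the remark above; and \cite[Lemma~3.1]{Hirsch} (the characterization of internal chain transitivity via the nonexistence of proper attractors for the restricted system) gives that no proper subset of $\Lambda$ is an attractor for $\restr{f}{\Lambda}$. For (3)$\Rightarrow$(4): given that $\Lambda$ is invariant with no proper attractor for the restriction, \cite[Lemma~3.2]{Hirsch} produces an asymptotic pseudo-orbit of $\restr{f}{\Lambda}$ — hence of $f$ in $X$ — whose $\omega$-limit set is $\Lambda$. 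For (4)$\Rightarrow$(1): \cite[Lemma~2.3]{Hirsch} states that the $\omega$-limit set of any asymptotic pseudo-orbit is internally chain transitive, closing the loop. (One should note the statement in the excerpt has a typographical slip, writing ``proper subset of $A$'' where $\Lambda$ is meant; I would silently read this as $\Lambda$.)

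The only real obstacle is a bookkeeping one: matching the exact formulation of the attractor notion used in \cite{Hirsch} (the ``$\lim_{n\to\infty}\sup_{x\in U} d(f^n(x),\Lambda)=0$'' version) with the Lyapunov-stability formulation recalled in the paragraph preceding the corollary, and making sure the ``attractor for $\restr{f}{\Lambda}$'' in (3) really is the object those lemmas talk about. Since the excerpt already asserts the two attractor definitions are easily seen to be equivalent, I would invoke that and not belabour it. Thus the proof is essentially a one-line citation assembly once the invariance remark is in place, and I would present it as such.

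\begin{proof}
By Proposition~\ref{prop:ICTinv} and Theorem~\ref{lem:WI=ICT}, any closed, nonempty, weakly incompressible (equivalently, internally chain transitive) subset of $X$ is invariant, so in each of the conditions below $\Lambda$ may be assumed invariant. The equivalence of (1) and (2) is exactly Theorem~\ref{lem:WI=ICT}. Given (1), $\Lambda$ is invariant and, by \cite[Lemma~3.1]{Hirsch}, no proper subset of $\Lambda$ is an attractor for $\restr{f}{\Lambda}$, giving (3). Given (3), \cite[Lemma~3.2]{Hirsch} yields an asymptotic pseudo-orbit of $\restr{f}{\Lambda}$ — and hence of $f$ in $X$ — with $\omega$-limit set equal to $\Lambda$, giving (4). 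Finally, by \cite[Lemma~2.3]{Hirsch} the $\omega$-limit set of any asymptotic pseudo-orbit of $f$ is internally chain transitive, so (4) implies (1).
\end{proof}
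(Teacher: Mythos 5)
Your proposal is correct and follows essentially the same route as the paper, which likewise deduces invariance from Proposition~\ref{prop:ICTinv} and Theorem~\ref{lem:WI=ICT} and then assembles the remaining equivalences from \cite[Lemmas~2.3, 3.1 \& 3.2]{Hirsch}; you have merely spelled out which cited lemma supplies which implication, something the paper leaves implicit.
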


%-----------------------------------------------------------------------------------------------

\section{Shadowing and Expansivity}\label{shad}

In light of Corollary \ref{WI=ICT} we would like to have a similar characterization of internally chain transitive sets in terms of $\omega$-limit sets of \emph{real} orbits, as opposed to pseudo-orbits. To this end we discuss in this section various notions of \emph{pseudo-orbit tracing}, or \emph{shadowing}, which allow us to guarantee the existence of a real orbit in the neighbourhood of a pseudo-orbit. Shadowing properties are not easy to check in general, thus we also explore notions of expansivity which imply certain shadowing properties. In what follows, we consider versions of known shadowing and expansivity properties and versions restricted to proper subsets of the space.

Let $\eps>0$, and let $K$ be either $\nat$ or $\{0,1,\ldots,k-1\}$ for some $k\in\nat$. The sequence $\{y_n\}_{n\in K}$ $\eps$-\emph{shadows} the sequence $\{x_n\}_{n\in K}$ if and only if for every $n\in K$, $d(y_n,x_n)<\eps$. Furthermore, we say that the sequence $\{y_n\}_{n\in\nat}$ \emph{asymptotically shadows} the sequence $\{x_n\}_{n\in\nat}$ if and only if $\lim_{n\rightarrow\infty}d(x_n,y_n)=0$. If both conditions hold simultaneously, we say that $\{y_n\}_{n\in\nat}$ \emph{asymptotically $\eps$-shadows} the sequence $\{x_n\}_{n\in\nat}$. If $y_n=f^n(y)$ for every $n\in\nat$ then we say that the point $y$ shadows (in whichever sense is appropriate) the sequence $\{x_n\}_{n\in\N}$.

The standard version of pseudo-orbit tracing (below) appeared in \cite{Bowen}, where it was used by Bowen in the study of $\omega$-limit sets of Axiom A diffeomorphisms.

Let $\Lambda$ be a subset of $X$. We say that $f$ has the \textit{pseudo-orbit tracing property on $\Lambda$} (or \textit{shadowing on $\Lambda$}) if for every $\varepsilon>0$ there is $\delta>0$ such that every infinite $\delta$-pseudo-orbit in $\Lambda$ is $\eps$-shadowed by a point $y\in X$. If this property holds on $\Lambda = X$, we simply say that $f$ has \emph{shadowing}.

\begin{rem}\label{lem:finite_shadowing}
It is easy to see that $f$ has shadowing if and only if for every $\eps>0$ there is a $\delta>0$ such that every finite $\delta$-pseudo-orbit is $\eps$-shadowed.
\end{rem}

Corollary \ref{WI=ICT} refers to \emph{asymptotic} pseudo-orbits, so we also consider a modified version of shadowing relating to such orbits, which comes from \cite{Pil}. For $\Lambda$ a subset of $X$, we say that $f$ has \emph{limit shadowing on $\Lambda$} if for any asymptotic pseudo-orbit $\set{x_n}_{n \in \N} \subseteq \Lambda$ there is a point $y\in X$ which asymptotically shadows $\set{x_n}_{n \in \N}$. If this property holds on $\Lambda = X$, then we say that $f$ has \emph{limit shadowing}.

Since there are many examples of systems possessing the limit shadowing property but not possessing the shadowing property (see \cite{KO2,Pil}), the definition of limit shadowing was extended in \cite{sakai}. We state this version of strong shadowing in a local form, consistent with our previous shadowing definition. We say that \emph{$f$ has s-limit shadowing on $\Lambda \subseteq X$} if for every $\eps > 0$ there is $\delta > 0$ such that the following two conditions hold:
\begin{enumerate}
    \item for every $\delta$-pseudo-orbit $\set{x_n}_{n\in \N}\subseteq \Lambda$ of $f$, there is $y \in X$ such that $y$ $\eps$-shadows $\{x_n\}_{n\in\N}$, and
    \item for every asymptotic $\delta$-pseudo-orbit $\set{z_n}_{n\in \N}\subseteq \Lambda$ of $f$, there is $y \in X$ such that $y$ asymptotically $\eps$-shadows $\{z_n\}_{n\in\N}$.
\end{enumerate}
In the special case $\Lambda=X$ we say that $f$ has \emph{s-limit shadowing}.

The following lemma links limit shadowing to s-limit shadowing; the proof is straightforward and is left to the reader.

\begin{lem}\label{s-limit_and_limit_shad}
Let $(X,d)$ be a compact metric space, and $f:X\rightarrow X$ be continuous. If $\Lambda\subseteq f(\Lambda)\subseteq X$ and $f$ has s-limit shadowing on $\Lambda$ then $f$ also has limit shadowing on $\Lambda$. In particular, if $f$ is surjective and has s-limit shadowing then $f$ also has limit shadowing.
\end{lem}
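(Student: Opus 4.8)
The plan is to unwind the definitions and exhibit, for each asymptotic pseudo-orbit in $\Lambda$, a point whose orbit asymptotically shadows it, using s-limit shadowing to control the tail of the pseudo-orbit more and more tightly. First I would fix an asymptotic pseudo-orbit $\{x_n\}_{n\in\N}\subseteq\Lambda$; by definition $d(f(x_n),x_{n+1})\to 0$. Pick a decreasing sequence $\eps_k\to 0$ and let $\delta_k>0$ be the corresponding constants from the s-limit shadowing property on $\Lambda$ (we may assume $\delta_k\le\eps_k$ and $\delta_k$ decreasing). Choose an increasing sequence of indices $N_k$ so that $d(f(x_n),x_{n+1})<\delta_k$ for all $n\ge N_k$. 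The key point is that each shifted tail $\{x_{N_k+n}\}_{n\in\N}$ is an asymptotic $\delta_k$-pseudo-orbit lying in $\Lambda$, because $\Lambda\subseteq f(\Lambda)$ guarantees nothing extra here — the tail is literally a sub-sequence of the original pseudo-orbit, hence still contained in $\Lambda$ — so condition (2) of s-limit shadowing gives a point $y_k\in X$ that asymptotically $\eps_k$-shadows $\{x_{N_k+n}\}_{n\in\N}$.

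Next I would assemble a single shadowing point. Let $u_k=f^{N_k}$... more carefully, the natural candidate is to work with the points $p_k := y_k$ pushed back, i.e. we want a point $y$ with $f^{N_k}(y)$ close to $y_k$; but the cleanest route is: since $X$ is compact, pass to a subsequence so that $y_k\to y^\ast$, and... actually that does not immediately work because the $y_k$ shadow different tails. Instead I would use a diagonal argument on the forward orbit: for each $k$, $\{f^n(y_k)\}_{n\in\N}$ asymptotically $\eps_k$-shadows $\{x_{N_k+n}\}_{n\in\N}$, so $f^{N_{k+1}-N_k}(y_k)$ is within $\eps_k$ of $x_{N_{k+1}}$, which is within $\eps_{k+1}$ of the point $y_{k+1}$ shadows at time $0$; hence the sequence $z^{(k)}:=f^{-N_k}$-preimages... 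Let me restructure: define a $\delta_1$-pseudo-orbit $\{w_n\}$ by gluing: use $f^n(y_1)$ for $0\le n< N_2-N_1$, then jump to $y_2$, use $f^{n}(y_2)$ for the next block, and so on, jumping to $y_k$ at time $N_k-N_1$. Because $y_k$ and the tail of $f^\cdot(y_{k-1})$ both track $\{x_{N_k+n}\}$ to within $\eps_{k-1}+\eps_k$, the glued sequence is an asymptotic pseudo-orbit, and moreover it is an asymptotic $\delta_1$-pseudo-orbit once we also insist (by shifting) that the first block already has small jumps — actually it is asymptotic and we can discard an initial segment so the remaining jumps are all $<\delta_1$. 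Then condition (2) of s-limit shadowing applied once more to this glued asymptotic $\delta_1$-pseudo-orbit produces a point $y$ asymptotically $\eps_1$-shadowing $\{w_n\}$, and since $\{w_n\}$ itself asymptotically shadows $\{x_{N_1+n}\}$ (the gluing errors $\to 0$), the triangle inequality gives $d(f^n(y),x_{N_1+n})\to 0$; finally replacing $y$ by a preimage-adjusted point, or simply noting $N_1$ is a fixed finite shift that does not affect the asymptotic conclusion, yields a point asymptotically shadowing $\{x_n\}_{n\in\N}$ as required.

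I expect the main obstacle to be the gluing step: verifying that the concatenated sequence $\{w_n\}$ is genuinely an asymptotic pseudo-orbit, i.e. controlling the error $d(f(w_n),w_{n+1})$ at the junction times $N_k-N_1$ where we swap from $f^\cdot(y_{k-1})$ to $y_k$. At a junction, $f(w_n)=f^{N_k-N_{k-1}}(y_{k-1})$ is within $\eps_{k-1}$ of $x_{N_k}$ (using that $y_{k-1}$ asymptotically shadows $\{x_{N_{k-1}+m}\}$ and $N_k-N_{k-1}$ is large when the shadowing has kicked in — here I need the indices $N_k$ spaced far enough apart that the asymptotic shadowing by $y_{k-1}$ is already within $\eps_{k-1}$ by time $N_k-N_{k-1}$, which is arranged by choosing $N_k$ after $y_{k-1}$ is fixed), while $w_{n+1}=y_k$ is within $\eps_k$ of $x_{N_k}$, so $d(f(w_n),w_{n+1})<\eps_{k-1}+\eps_k\to 0$. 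Within a block the error is $0$ since $w$ follows a genuine orbit segment of $f$. So the junction errors tend to zero and $\{w_n\}$ is an asymptotic pseudo-orbit; after deleting a finite initial segment it becomes an asymptotic $\delta_1$-pseudo-orbit. The only other thing to be careful about is that all the pseudo-orbits we feed into the s-limit shadowing hypothesis lie in $\Lambda$: the tails $\{x_{N_k+n}\}$ clearly do, but the glued sequence $\{w_n\}$ generally does not lie in $\Lambda$. This is where the hypothesis $\Lambda\subseteq f(\Lambda)$ is not quite enough as stated — so instead I would avoid gluing real orbit segments inside $\Lambda$ and instead glue \emph{the pseudo-orbit pieces themselves}: apply s-limit shadowing only to sub-tails of $\{x_n\}$ (which do lie in $\Lambda$) and combine the resulting shadowing \emph{points} via a compactness/limit argument, using $\Lambda\subseteq f(\Lambda)$ to keep lifting shadowing points backward along $f$ when matching up successive blocks. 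The cleanest published-style argument, and the one I would write, is the compactness version: choose $y_k$ asymptotically $\eps_k$-shadowing the tail from $N_k$, use $\Lambda\subseteq f(\Lambda)$ to pick preimages $\hat y_k\in X$ with $f^{N_k}(\hat y_k)$ close to $y_k$, pass to a convergent subsequence $\hat y_k\to y$, and check that $y$ asymptotically shadows $\{x_n\}$ because for every $\eta>0$ we can find $k$ with $\eps_k<\eta/3$ and then $n$ large with $d(f^n(y),f^n(\hat y_k))<\eta/3$ and $d(f^n(\hat y_k),x_n)<\eps_k+\text{(asymptotic shadowing error)}<2\eta/3$. That reduces everything to routine triangle-inequality bookkeeping.
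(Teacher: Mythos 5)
The paper leaves this proof as an exercise, but the intended ``straightforward'' argument is short: fix any $\varepsilon>0$ and let $\delta>0$ be given by s-limit shadowing on $\Lambda$; given an asymptotic pseudo-orbit $\{x_n\}_{n\in\N}\subseteq\Lambda$, choose $N$ with $d(f(x_n),x_{n+1})<\delta$ for all $n\ge N$, and use $\Lambda\subseteq f(\Lambda)$ to pick $w_0,\dots,w_{N-1}\in\Lambda$ with $f(w_i)=w_{i+1}$ and $f(w_{N-1})=x_N$. Then $w_0,\dots,w_{N-1},x_N,x_{N+1},\dots$ is an asymptotic $\delta$-pseudo-orbit contained in $\Lambda$, so one application of condition (2) of s-limit shadowing gives $y\in X$ with $d(f^n(y),x_n)\to 0$, because the two sequences agree from index $N$ onward. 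Thus $\Lambda\subseteq f(\Lambda)$ is exactly enough as stated: its role is to prepend a genuine backward orbit inside $\Lambda$ to the good tail, which simultaneously keeps the pseudo-orbit in $\Lambda$ and fixes the index alignment. Your proposal circles around these ingredients (tails with small jumps, using $\Lambda\subseteq f(\Lambda)$ for the initial segment) but never lands on this step, and the argument you finally commit to (the ``compactness version'') has a genuine gap.

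Concretely, two steps fail. First, the preimage step: $\Lambda\subseteq f(\Lambda)$ yields preimages \emph{in $\Lambda$ of points of $\Lambda$}; it gives no $N_k$-fold preimage of the shadowing point $y_k$, which need not lie in $\Lambda$ (nor even in $f(X)$). If instead you take $\hat y_k\in\Lambda$ with $f^{N_k}(\hat y_k)=x_{N_k}$ (which is indeed within $\eps_k$ of $y_k$), then $f^{N_k+m}(\hat y_k)=f^m(x_{N_k})$ is the \emph{true} orbit of $x_{N_k}$, which has no reason to track the pseudo-orbit $\{x_{N_k+m}\}_{m\in\N}$ --- that is precisely what shadowing is needed for --- so your bound $d(f^n(\hat y_k),x_n)<\eps_k+(\text{shadowing error})$ is unjustified. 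Second, and fatally, the step ``$n$ large with $d(f^n(y),f^n(\hat y_k))<\eta/3$'' is false in general: from $\hat y_k\to y$, continuity controls $d(f^n(y),f^n(\hat y_k))$ only for $n$ in a fixed finite range, not for all large $n$; in the expansive/hyperbolic systems this paper treats, nearby points typically separate, so the limit of the $\hat y_k$ need not shadow anything. (Your earlier gluing attempt you rightly abandon, since the glued sequence of true orbit segments leaves $\Lambda$ and s-limit shadowing is only assumed on $\Lambda$; note also that dismissing the final $N_1$-shift as ``a fixed finite shift that does not affect the asymptotic conclusion'' is wrong for the same reason the preimage step is problematic --- undoing the shift requires a preimage of the shadowing point, which need not exist, and avoiding exactly this is what the backward extension inside $\Lambda$ accomplishes.)
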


We introduce another form of shadowing, which we call shadowing with exact hit, or \emph{h-shadowing}. The definition is motivated by the fact that h-shadowing characterizes shifts of finite type in the class of one-sided subshifts (see Remark \ref{rem:posexp_h-shad}). %Later, we will show that h-shadowing is satisfied by various interval maps on regions excluding local extrema.

\begin{defn}
Let $(X,d)$ be a compact metric space, and $f:X\rightarrow X$ be continuous. We say that $f$ has \textit{h-shadowing on $\Lambda\subseteq X$} if for every $\varepsilon>0$ there is a $\delta>0$ such that for every finite $\delta$-pseudo-orbit $\{x_0,x_1,\ldots,x_m\}\subseteq \Lambda$ there is $y\in X$ such that $d(f^i(y),x_i)<\varepsilon$ for every $i<m$ and $f^m(y)=x_m$. If $\Lambda=X$ then we simply say that $f$ has \emph{h-shadowing}.
\end{defn}

Clearly shadowing (and every variation thereof) is hereditary; if $f$ has shadowing on $\Lambda$ then just by the definition $f$ has shadowing on every set $\Lambda'\subseteq \Lambda$.

In a forthcoming paper we explore to a greater extent the interdependencies between the various notions of shadowing; for now we remark that h-shadowing and shadowing are not equivalent in general, as is shown in Example \ref{eg:h-shadow_and_shadow}. In Example \ref{eg:equiv_tent} we show that the full tent map has h-shadowing; for this we need the following lemma.

\begin{lem}\label{lem:reverse_tracking}
Let $T\colon [0,1]\rightarrow[0,1]$ be the full tent map with slope $2$. There is $\lambda>0$ such that for every $\delta<\lambda$, every integer $n>0$ and every $x,y\in [0,1]$ for which $|T^n(x)-y|<\delta$, there is $z$ such that $|T^i(x)-T^i(z)|<2\delta$ for $i=1,\ldots, n$ and additionally $T^n(z)=y$.
\end{lem}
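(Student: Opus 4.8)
The plan is to exploit the piecewise-affine, piecewise-expanding structure of the tent map $T$: each of the two branches of $T$ is an affine bijection of the relevant half-interval onto $[0,1]$ with slope $\pm 2$, so inverse branches of $T^n$ are affine contractions with factor $2^{-n}$ on whatever piece of domain they cover. I would work backwards along the orbit $x, T(x), \dots, T^n(x)$, choosing at each step the correct inverse branch of $T$ so as to stay near the forward orbit of $x$, and use the contraction to control how far the perturbation spreads as we pull back. Concretely, set $y_n = y$ and, having defined $y_k$ close to $T^k(x)$, let $y_{k-1}$ be the point obtained by applying to $y_k$ the branch of $T^{-1}$ whose image is the $T$-monotone piece containing $T^{k-1}(x)$; then $z := y_0$ is the desired point and $T^n(z) = y$ holds exactly by construction.

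The key estimate is the propagation of the error. Since $T$ has slope $\pm 2$ on each monotone piece, the corresponding local inverse branch has slope $\pm 1/2$, so $|y_{k-1} - T^{k-1}(x)| \le \tfrac12 |y_k - T^k(x)|$ \emph{provided} both $y_k$ and $T^k(x)$ lie in the same monotone piece and the branch is well-defined there. Iterating from $|y_n - T^n(x)| < \delta$ gives $|y_k - T^k(x)| < 2^{k-n}\delta \le \delta$ for all $k$, which immediately yields the claimed bound $|T^i(x) - T^i(z)| = |y_i - T^i(x)| < 2\delta$ for $i = 1,\dots,n$ (in fact with room to spare).

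The main obstacle is exactly the proviso in the previous paragraph: the inverse branch "at $T^{k-1}(x)$" is only defined and appropriately contracting as long as $y_k$ has not wandered across the turning point $1/2$ relative to $T^k(x)$, i.e.\ as long as $T^k(x)$ is not within distance $\delta$ of $1/2$ in the wrong direction, and similarly we must ensure the pulled-back point $y_{k-1}$ does not leave $[0,1]$. This is where the hypothesis $\delta < \lambda$ enters: I would choose $\lambda$ small enough that the bound $|y_k - T^k(x)| < \delta$ already forces $y_k$ to lie in the same closed monotone piece $[0,1/2]$ or $[1/2,1]$ as $T^k(x)$ — the only genuinely delicate case being when $T^k(x)$ is itself close to $1/2$, where both closed pieces contain points near $T^k(x)$ and one simply picks either branch, noting that both are defined there and both contract. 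A clean way to package this is to observe that for any $t \in [0,1]$ at least one monotone branch of $T^{-1}$ is defined on the whole interval $(t-\delta, t+\delta)\cap[0,1]$ and maps it into $[0,1]$ with the slope-$\tfrac12$ bound, once $\delta$ is below a fixed threshold; a short case analysis on whether $t \le 1/2$, $t \ge 1/2$, or $t$ near $1/2$ suffices. With $\lambda$ chosen accordingly, the induction goes through and the lemma follows.
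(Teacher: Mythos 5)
Your proposal is correct and is essentially the paper's own argument: the paper likewise pulls $y$ back along preimages adapted to the orbit of $x$ (it takes the preimage of $y_k$ closest to $T^{k-1}(x)$, which for the symmetric slope-$2$ tent map is exactly your choice of inverse branch), halving the error at each step and obtaining $T^n(z)=y$ exactly. The only remark is that your ``main obstacle'' is vacuous for the full tent map: both inverse branches $t\mapsto t/2$ and $t\mapsto 1-t/2$ are defined and $\tfrac12$-Lipschitz on all of $[0,1]$, and the branch whose image is the monotone piece containing $T^{k-1}(x)$ always sends $T^k(x)$ to $T^{k-1}(x)$, so the contraction estimate holds with no proviso and no case analysis near the turning point is needed (whereas picking ``either branch'' arbitrarily there would be the one way to go wrong, since the branch must remain the one through $T^{k-1}(x)$).
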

\begin{proof}
Let $c$ denote the critical point of $T$. We denote by $|J|$ the diameter of an interval $J\subseteq [0,1]$.

First observe that there is $\lambda>0$ such that if $J$ is an interval containing $c$ with $|J|<2\lambda$ then $T^{-1}(J)$ has two connected components, none of them containing $c$.

Fix $\delta<\lambda$, fix any $x,y$ with properties as in the assumptions of the lemma and denote $y_n=y$, $x_i=T^i(x)$. {Notice that for $y_n\neq T(c)$, $|T^{-1}(y_n)|=2$ since $T$ is two-one on such points; let $y_{n-1}\in T^{-1}(y_n)$ be the point closest to $x_{n-1}$ (which is $c$ if $y_n=T(c)$)}. If $y_n=x_n$ then the result follows trivially with $z=x$, so assume that $x_n\neq y_n$.

For simplicity, assume that $x_n<y_n$ (the proof for the second case is identical). Then two possibilities can take place:
\begin{enumerate}
\item if $c\not\in (x_{n-1},y_{n-1})$ then $|x_{n-1}-y_{n-1}|< \delta/2 < \delta$,
\item if $c\in (x_{n-1},y_{n-1})$ then $|x_{n-1}-y_{n-1}| < 2\delta$. Additionally when $n>1$, we denote by $y_{n-2}$ a point in preimage $y_{n-2}\in T^{-1}(y_{n-1})$ closest to $x_{n-2}$ and then by the choice of $\lambda$ we have that $c$ is not in the interval spanned by $x_{n-2},y_{n-2}$. This implies that
\[
    |x_{n-2}-y_{n-2}|\leq \frac{1}{2} |x_{n-1}-y_{n-1}|<\delta.
\]
\end{enumerate}
By induction we construct a sequence $y_0,y_1,\ldots, y_n$ such that $T^i(y_0)=y_{i}$, $|y_i-x_i|<2\delta$ for $i=0,\ldots, n$ and $y_n=y$. It is enough to put $z=y_0$ and the proof is finished.
\end{proof}

\begin{eg}\label{eg:equiv_tent}
The tent map $T$ with slope $2$ has $h$-shadowing. Recall that
\[
T(x)=
\begin{cases}2x & x\in[0,1/2];\\2(1-x) & x\in[1/2,1]\end{cases}
\]
To see that $T$ has $h$-shadowing, note first that it has shadowing \cite{Coven}, and let $\lambda$ be provided by Lemma~\ref{lem:reverse_tracking}. Fix any $0<\eps<\lambda/2$ and use the definition of shadowing to find $\delta$ for $\eps/3$. Decrease $\delta$ if necessary so that $\delta<\eps/3$.

Let $x_0,x_1,\ldots,x_n$ be arbitrary $\delta$-pseudo-orbit and let $y$ be a point which $\eps/3$-traces it. By Lemma~\ref{lem:reverse_tracking} there is a point $z$ such that $|T^i(z)-T^i(y)|<2\delta$ for $i=0,\ldots,n$ and $T^n(z)=x_n$. But then
\[
|T^i(z)-x_i|\leq |T^i(z)-T^i(y)|+|T^i(y)-x_n|<3\delta <\eps
\]
and so the conclusion of Lemma \ref{lem:reverse_tracking} gives us that $T$ has $h$-shadowing.
\andysq
\end{eg}

\begin{eg}\label{eg:h-shadow_and_shadow}
By Remark \ref{lem:finite_shadowing} we immediately see that every map with h-shadowing has shadowing; the converse is not true however. To see this, consider a tent map $T$ with slope less than $2$ and critical point $c=1/2$, with shadowing (many such maps exist -- see \cite{Coven}). Take any pre-image path $\{x_0,x_1,\ldots,x_m=T(c)\}$ ending at the image of the critical point. Let $\eps>0$, then for any $0<\delta<1-T(c)$ let $x_m' = T(c)+\delta/2$, and consider the $\delta$-pseudo-orbit $\{x_0,x_1,\ldots,x_m'\}$; clearly there is no point which $\eps$-shadows this pseudo-orbit with exact hit.
\andysq
\end{eg}

In order to decide whether a map has any form of shadowing, we need to look at various notions of expansion in maps. The idea of an expanding (or expansive map) has been used in many contexts in connection with various dynamical properties of maps, shadowing in particular. In \cite{Coven}, Coven, Kan and Yorke use one notion to prove shadowing in tent maps; in \cite{Urbanski}, Przytycki and Urba{\'n}ski use a different notion to prove shadowing in compact metric spaces. Many maps have expansive properties on a proper subset of the whole space, but not on the space itself, and this local type of expansivity is linked to local shadowing (shadowing on a given subset) and $\omega$-limit sets.

\medskip

For a subset $\Lambda\subseteq X$, we say that $f$ is \emph{open on $\Lambda$} if for every $x\in \Lambda$ and every neighbourhood $U$ of $x$, $f(x)\in\Int(f(U))$. Note that $f$ is open on $\Lambda$ if and only if for every $x\in \Lambda$ there is a neighborhood basis $\set{U_i}_{i\geq 0}$ such that $f(U_i)$ is open, for every $i\geq 0$. This local definition of openness is consistent with the standard definition of an open map, since if $f$ is open on $X$, then $f(U)$ is open for every open set $U$.

The following properties have been studied extensively, and can be found in many texts, including \cite{AH, sakai, Urbanski, PEsakai, Walters}.

We say that $f$ is \emph{positively expansive} (with expansive constant $b>0$) if for any $x,y\in X$ the condition
\[
d(f^n(x),f^n(y))<b \hspace{0.5cm}\mbox{ for every }0\leq n\in\integ
\]
implies that $x=y$.

If $f$ is a surjective map it is said to be \textit{$c$-expansive} (with expansive constant $b'>0$) if for any $x,y\in X$ and any full orbits $\set{x_m}_{m\in\Z}$ and $\set{y_n}_{n\in\Z}$ through $x$ and $y$ respectively the condition
\[
d(x_n,y_n)<b' \hspace{0.5cm}\mbox{ for every }n\in\integ
\]
implies that $x=y$.

Positively expansive maps are clearly $c$-expansive, but the converse is not true in general (an example is the bi-infinite full shift, which is $c$-expansive but not positively expansive).

\begin{thm}\label{thm:LimS}
Let $(X,d)$ be a compact metric space and let $f:X\rightarrow X$ be continuous.
\begin{enumerate}
\item If $f$ is positively expansive then $f$ has shadowing if and only if $f$ has h-shadowing;
\item If $f$ is $c$-expansive then $f$ has shadowing if and only if $f$ has s-limit shadowing.
\end{enumerate}
\end{thm}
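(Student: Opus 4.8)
The plan is to prove the two equivalences by showing, in each case, that the (new, stronger) shadowing property forces a kind of ``finite-time correction'' that expansivity makes unavoidable. Since h-shadowing trivially implies shadowing (Remark \ref{lem:finite_shadowing}) and s-limit shadowing trivially implies shadowing (take the first clause of its definition), in both parts only the forward implication ``shadowing $\Rightarrow$ stronger shadowing'' needs work; the backward implications require no expansivity.

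For part (1), assume $f$ is positively expansive with expansive constant $b$ and has shadowing. Fix $\eps>0$; we may assume $\eps<b$. The key tool is the fact (standard for positively expansive maps on compact spaces, and provable directly by a compactness/contradiction argument) that there is an integer $N=N(\eps)$ such that whenever $d(f^i(u),f^i(v))<b$ for all $0\le i\le N$, then $d(u,v)<\eps$ — in other words, expansivity is ``uniform in finite time''. Now, given a finite $\delta$-pseudo-orbit $\{x_0,\dots,x_m\}$, I would append to it a genuine orbit segment of length $N$ starting at $x_m$, i.e. consider the $\delta$-pseudo-orbit $\{x_0,\dots,x_m,f(x_m),\dots,f^N(x_m)\}$, and apply shadowing (for a suitably small $\delta$ chosen against $b$ and against the uniform-expansivity modulus) to get a point $y$ that $b$-shadows this longer pseudo-orbit. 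Then $d(f^{m+i}(y),f^i(x_m))<b$ for $0\le i\le N$, so by uniform finite-time expansivity $d(f^m(y),x_m)<\eps$ — but this only gives closeness, not an exact hit. To upgrade to $f^m(y)=x_m$ one uses that $f^m(y)$ and $x_m$ have $b$-close forward orbits for \emph{all} future time (not just $N$ steps): indeed once we know $d(f^m(y),x_m)$ is small we can iterate the uniform-expansivity estimate along the genuine orbit tails, pushing the discrepancy below every $\eps'>0$, hence $f^m(y)=x_m$ by positive expansiveness. (Alternatively, and more cleanly, one fixes $\eps$, chooses $\delta$ for $\eps/2$ via shadowing, appends an infinite genuine tail $f(x_m),f^2(x_m),\dots$ to the pseudo-orbit, shadows with a point $y$, and then notes the forward orbit of $y$ from time $m$ stays within $\eps<b$ of the forward orbit of $x_m$ forever, so $f^m(y)=x_m$.) Care is needed to track the two roles of $\delta$: small enough that shadowing delivers a $b$-close shadowing orbit, and small enough that the $\eps$-closeness on the initial block $i<m$ survives.

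For part (2), assume $f$ is $c$-expansive with constant $b'$ and has shadowing; we must produce s-limit shadowing. Clause (1) of s-limit shadowing is just ordinary shadowing, so the content is clause (2): an asymptotic $\delta$-pseudo-orbit must be asymptotically $\eps$-shadowed. The standard route is the ``telescoping'' argument: given an asymptotic pseudo-orbit $\{z_n\}$, break it into successive blocks $B_k$ on which it is a $\delta_k$-pseudo-orbit with $\delta_k\to 0$ (possible since $d(f(z_n),z_{n+1})\to 0$), apply shadowing on each tail $\{z_n\}_{n\ge n_k}$ to get shadowing points $y^{(k)}$ that $\eps_k$-shadow with $\eps_k\to 0$, and then splice these into a single genuine orbit. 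The splicing is where $c$-expansivity enters: two genuine (forward) orbits that stay within $2\eps_k < b'$ of each other on a long block of length $\to\infty$ must, by the uniform finite-time version of $c$-expansivity (again a compactness argument: closeness for $N(\eps)$ consecutive coordinates of full orbits forces $\eps$-closeness of the base points), be $\eps$-close at the block's start for large $k$; chaining these comparisons lets one show the $y^{(k)}$ converge along their orbits to a single point $y$ whose orbit asymptotically $\eps$-shadows $\{z_n\}$. A subtlety is that $c$-expansivity is stated for surjective $f$ in terms of \emph{full} orbits, so one should either restrict attention to $\bigcap_n f^n(X)$ (onto which $f$ is surjective) or observe that forward orbits of the shadowing points lie in the image and admit full-orbit extensions; this bookkeeping, together with the convergence argument for the spliced orbit, is the main obstacle.

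I expect the principal difficulty in both parts to be the same: converting the ``infinite-time'' expansivity hypothesis into a \emph{uniform finite-time} statement (a modulus $N(\eps)$) via compactness, and then managing the interplay of the several $\eps$'s and $\delta$'s so that the shadowing orbit's closeness on the finite initial block, the expansive correction at the endpoint, and (in part 2) the asymptotic decay of the block errors all fit together. The correction-at-the-endpoint mechanism in part (1) — getting an \emph{exact} hit rather than mere $\eps$-closeness — is the conceptually novel point and is where I would be most careful.
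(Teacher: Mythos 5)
For part (1), your parenthetical ``cleaner'' argument is exactly the paper's proof: append the genuine infinite tail $f(x_m),f^2(x_m),\dots$ to the finite $\delta$-pseudo-orbit, take a point $z$ that $\eps$-shadows it with $\eps<b$, and conclude $f^m(z)=x_m$ directly from positive expansivity, since $d(f^{m+j}(z),f^j(x_m))<b$ for \emph{all} $j\geq 0$. Be aware that your first variant (appending only a length-$N$ genuine tail and then ``iterating the uniform-expansivity estimate'') does not work as stated: shadowing gives you no control on $d(f^{m+i}(y),f^i(x_m))$ for $i>N$, so there is nothing to iterate on; but you superseded that version yourself, so part (1) is fine and coincides with the paper.

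For part (2) you take a genuinely different route from the paper. The paper fixes $\eps<b/2$, takes the point $z$ furnished by ordinary shadowing for the asymptotic $\delta$-pseudo-orbit $\{x_n\}$, and shows \emph{that same point} already asymptotically shadows, by contradiction: if $d(f^n(z),x_n)\not\to 0$, pass to a subsequence along which $f^n(z)\to p_0$, $x_n\to q_0$ with $d(p_0,q_0)=\eta>0$; continuity and the asymptotic-pseudo-orbit condition propagate this forward, and nested further subsequences of $f^{n-k}(z)$ and $x_{n-k}$ produce backward orbit points, yielding full orbits through $p_0$ and $q_0$ that stay within $b/2$ of each other; $c$-expansivity then forces $p_0=q_0$, a contradiction. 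Your telescoping scheme can be made to work, but its weak link is precisely the step you flag: ``splicing'' the $y^{(k)}$ into a single genuine orbit is not possible (genuine orbits cannot be concatenated), and convergence of the $y^{(k)}$ is neither clear nor needed. The repair is to compare each $y^{(k)}$ with the single point $y^{(1)}$ shadowing the whole sequence: for $n\geq n_k+N(\eps)$ the orbit segments of $y^{(1)}$ and $y^{(k)}$ on the window $[n-N,n+N]$ lie within $\eps_1+\eps_k<b'/2$ of each other, so the uniform finite-time form of $c$-expansivity (which should be proved with the constant $b'/2$, since the compactness limit only yields non-strict inequalities) gives $d(f^n(y^{(1)}),f^{n-n_k}(y^{(k)}))<\eps$, hence $d(f^n(y^{(1)}),z_n)<\eps+\eps_k$ for large $n$; letting $k\to\infty$ shows $y^{(1)}$ itself asymptotically shadows. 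Note that in this repaired form only finite segments of forward orbits enter the expansivity lemma, so your worry about full-orbit extensions and surjectivity bookkeeping evaporates, just as it does in the paper, where the backward orbit points arise as limits of forward iterates. The paper's argument is shorter and avoids the uniform finite-time lemma altogether; your approach, once repaired, has the mild advantage of producing an explicit modulus for the asymptotic convergence, but as written the construction of the limiting orbit is a genuine gap.
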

\begin{proof}
$(1)$:
If $f$ has h-shadowing then $f$ has shadowing (see Remark \ref{lem:finite_shadowing}). So suppose that $f$ has shadowing, let $\eps<b$ and let $\delta>0$ be provided by shadowing for $\eps$. Fix any $\delta$-pseudo-orbit $\set{x_0,x_1,\ldots, x_m}$ and extend it to the infinite $\delta$-pseudo-orbit
\[
x_0,x_1,\ldots, x_m, f(x_m), f^2(x_m), \ldots
\]
If $z$ is a point which $\eps$-shadows the above pseudo-orbit, then $d(f^{j+m}(z),f^j(x_m))<b$ for all $j\geq 0$ which implies that $f^m(z)=x_m$. Thus $f$ has h-shadowing.

$(2)$:
We have to prove if $f$ has shadowing then it has s-limit shadowing, since the converse implication is trivial. Fix $\eps>0$ and assume that $\eps<b/2$ where $b$ is the expansive constant. Let $\delta>0$ be a constant provided by the shadowing property for $\eps$. Shadowing implies that the first part of the definition of s-limit shadowing holds. To prove the second part, let $\set{x_n}_{n\in \N}$ be an asymptotic $\delta$-pseudo-orbit that is $\eps$-shadowed by the point $z$.

Suppose, for a contradiction, that $d(f^n(z),x_n)$ does not converge to $0$ as $n\to\infty$. Since $X$ is compact (so that every sequence has a convergent subsequence), there are points $p_0$ and $q_0$ in $X$ and an infinite subset $N_0$ of $\N$ such that
\begin{enumerate}
  \item $\lim_{n\to\infty, n\in N_0}d\big(f^{n}(z),x_{n}\big)=\eta>0$,
  \item $\lim_{n\to\infty, n\in N_0}f^{n}(z)= p_0$, and
  \item $\lim_{n\to\infty, n\in N_0}x_{n}= q_0$.
\end{enumerate}
By continuity,
$$\lim_{\ontop{n\to\infty}{n\in N_0}}f^{n+k}(z)= p_k=f^k(p_0)$$
for all $k\ge0$. Since
$$
d\big(x_{n+1},f(q_0)\big)
\le d\big(x_{n+1},f(x_{n})\big)+d\big(f(x_{n}),f(q_0)\big),
$$
continuity and the fact that $\{x_n\}$ is an asymptotic pseudo-orbit imply that
$$\lim_{\ontop{n\to\infty}{n\in N_0}}x_{n+1}=q_1=f(q_0).$$
Hence $\lim_{n\to\infty, n\in N_0}x_{n+k}= q_k=f^k(q_0)$ for all $k\ge0$.

Since $X$ is compact, there are points $p_{-1}$ and $q_{-1}$ and an infinite subset $N_{-1}$ of $N_0$ such that
$$
\lim_{\ontop{n\to\infty}{n\in N_{-1}}}f^{n-1}(z)=p_{-1}
\qquad\text{and}\qquad
\lim_{\ontop{n\to\infty}{n\in N_{-1}}}x_{n-1}=q_{-1}.
$$
Again, continuity and the fact that $\{x_n\}$ is an asymptotic pseudo-orbit imply that $f(p_{-1})=p_0$ and $f(q_{-1})=q_0$. Repeating this argument we can find points $p_{-1},p_{-2},\dots$, $q_{-1},q_{-2},\dots$, and infinite sets $N_{-1}\supseteq N_{-2}\supseteq\dots$, such that for all $0<k\in\N$
\begin{enumerate}
\item $0\le n-k$ for all $n\in N_{-k}$,
  \item $\lim_{n\to\infty, n\in N_{-k}}f^{n-k}(z)=p_{-k}$ and
  $f(p_{-k})=p_{-k+1}$,
\item $\lim_{n\to\infty, n\in N_{-k}}f^{n-k}(z)=p_{-k}$ and $f(q_{-k})=q_{-k+1}$.
\end{enumerate}

Now $\{p_k\}_{k\in\Z}$ and $\{q_k\}_{k\in\Z}$ are full orbits passing through $p_0$ and $q_0$ respectively. Moreover
$$
d(p_k,q_k)\le
\begin{cases}
  \sup_{n\in N_0}d\big(f^{n+k}(z),x_{n+k}\big),&\text{if }k\ge0,\\
\sup_{n\in N_k}d\big(f^{n+k}(z),x_{n+k}\big),&\text{if }k<0.\\
\end{cases}
$$
Since $\eps<b/2$ and $z$ $\eps$-shadows $\{x_n\}$, $d(p_k,q_k)<b/2$ for all $k\in\Z$. It follows by $c$-expansivity that
$$
0=d(p_0,q_0)=\lim_{\ontop{n\to\infty}{n\in N_0}}d(f^{n}(z),x_{n})=\eta>0,
$$
which is the required contradiction.
\end{proof}

Notice that if $f$ is a positively expansive surjection then the properties of shadowing, h-shadowing and s-limit shadowing are equivalent. Part $(2)$ of the above result is a natural generalization of the results of \cite{sakai,PEsakai}.

\begin{rem}\label{rem:posexp_h-shad}
The assumptions of Proposition~\ref{thm:LimS} (1) are fulfilled by every open, positively expansive map \cite{PEsakai}; an example is a positively expansive homeomorphism, however this case is trivial since the space must be finite \cite{PEHomeo}. A nontrivial class of positively expansive open maps is the class of one-sided shifts of finite type \cite{KurkaBook}. Walters \cite{Walters} showed that shift spaces have shadowing if and only if they are of finite type. Since maps with h-shadowing have shadowing, and one-sided shifts of finite type are positively expansive, we see that one-sided shifts of finite type are characterized by h-shadowing in the class of shift spaces.
\end{rem}

\begin{defn}
$f$ is said to be \textit{topologically hyperbolic} if it is both $c$-expansive and has shadowing.
\end{defn}

There is a large class of topologically hyperbolic maps. The classical example is an Axiom~A diffeomorphism restricted to its non-wandering set (see \cite{Bowen} for example). Other important classes are shifts of finite type (one or two-sided), and topologically Anosov maps (see \cite{Yang}). A list of conditions equivalent to topological hyperbolicity in the context of homeomorphisms can be found in \cite{sakai} (see also \cite{Ombach,Ombach2}). As we see in Theorem~\ref{thm:main_ICT_omega}, $\omega$-limit sets are fully characterized in the context of topologically hyperbolic maps by \iity.

We note that to obtain such a characterization of $\omega$-limit sets in terms of topological hyperbolicity, the assumption of shadowing can't be dropped on its on, since there are $c$-expansive maps without shadowing for which internal chain transitivity does not characterize $\omega$-limit sets. One such class of maps are chain-transitive sofic shifts which are not transitive (see \cite{kazda} for detailed description of this class), and thus cannot be the $\omega$-limit set of any of the inner points \cite{BlockCoppel}.

\medskip

The standard definition of an expanding map is the following (see also \cite{sakai, Urbanski, PEsakai}), which is generally a stronger property than either positively expansive or $c$-expansive, and will enable us to demonstrate the existence of shadowing properties in various maps. This property can be observed in many classes of maps, such as interval maps away from their turning points.

For a closed set $\Lambda$, we say that $f$ is \textit{expanding on $\Lambda$} if there are $\delta>0$, $\mu>1$ such that $d(f(x),f(y))\geq \mu d(x,y)$ provided that $x,y\in \Lambda\subseteq X$ and $d(x,y)<\delta$. In the case that $\Lambda=X$ we simply say that $f$ is expanding. If there is some open set $U\supset\Lambda$ such that the definition of expanding holds for every $x,y\in U$, we say that $f$ is \textit{expanding on $U$}, or if the set $U$ is not specified, we say $f$ is \textit{expanding on a neighbourhood of $\Lambda$}.

\begin{rem}\label{rem:exp_equiv}
If $f$ is expanding on $\Lambda$ then for each $x\in \Lambda$ there is an open set $U\ni x$ such that $\restr{f}{U\cap \Lambda}$ is one-to-one. Furthermore, if $f$ is expanding on an invariant set $\Lambda$ then it is easy to see that $f$ is positively expansive on $\Lambda$, and also $c$-expansive.
\end{rem}

Przytyicki and Urba{\'n}ski \cite{Urbanski} define a property they refer to as \textit{expanding at $\Lambda$}, which is equivalent to our notion of expanding on a neighbourhood of $\Lambda$; the following is from their text (Corollary 3.2.4):

\begin{lem}\label{lem:Urban_lemma}
Let $(X,d)$ be a compact metric space, and $f:X\rightarrow X$ be continuous. If $f$ is open and expanding then $f$ has shadowing.
\end{lem}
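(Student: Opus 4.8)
The plan is to construct the shadowing point by pulling a finite pseudo-orbit back, one step at a time, through local inverse branches of $f$: openness supplies the branches, expansion makes them uniformly contracting, and a geometric-series estimate keeps the accumulated error below $\eps$; Remark~\ref{lem:finite_shadowing} then upgrades the finite statement to shadowing. First I would fix expansion data $\delta_0>0$, $\mu>1$ with $d(f(x),f(y))\ge\mu\,d(x,y)$ whenever $d(x,y)<\delta_0$. The key preliminary is to promote pointwise openness to a \emph{uniform} local surjectivity statement: there is $\rho>0$ such that $B(f(x),\rho)\subseteq f\big(B(x,\delta_0/4)\big)$ for every $x\in X$. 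For a single $x$ this is immediate, since $f$ is an open map so $f\big(B(x,\delta_0/4)\big)$ is an open neighbourhood of $f(x)$; the uniform choice of $\rho$ is a standard compactness argument, the one step I expect to be genuinely delicate. If no uniform $\rho$ worked one would obtain $x_n\to x$ and $y_n\to f(x)$ with $y_n\notin f\big(B(x_n,\delta_0/4)\big)$, contradicting the facts that $f\big(B(x,\delta_0/8)\big)$ is a neighbourhood of $f(x)$ and that $B(x,\delta_0/8)\subseteq B(x_n,\delta_0/4)$ for all large $n$.

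Next I would isolate the local inverse branch: if $d(f(x),y)<\rho$ then $y=f(x')$ for some $x'\in B(x,\delta_0/4)$, and since $d(x,x')<\delta_0/4<\delta_0$ expansion gives $d(x,x')\le\mu^{-1}d(f(x),y)$; any two preimages of $y$ in $B(x,\delta_0/4)$ lie within $\delta_0/2$, hence coincide by expansion, so the branch is well defined. This is the tool for the pullback.

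Finally, given $\eps>0$, choose $\delta>0$ with $\tfrac{\mu}{\mu-1}\delta<\rho$ and $\tfrac{1}{\mu-1}\delta<\eps$. For a finite $\delta$-pseudo-orbit $x_0,\dots,x_N$ set $y_N=x_N$ and, having defined $y_{n+1}$, let $y_n$ be the inverse branch of $y_{n+1}$ near $x_n$; this is legitimate because $d(f(x_n),y_{n+1})\le d(f(x_n),x_{n+1})+d(x_{n+1},y_{n+1})<\delta+d(x_{n+1},y_{n+1})$, and an induction shows the errors $a_n:=d(x_n,y_n)$ satisfy $a_N=0$ and $a_n\le\mu^{-1}(\delta+a_{n+1})$, whence $a_n\le\delta(\mu^{-1}+\mu^{-2}+\cdots)<\delta/(\mu-1)<\eps$ and in particular $d(f(x_n),y_{n+1})<\rho$ at every step, keeping the construction valid. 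Since $f^n(y_0)=y_n$, the point $y_0$ $\eps$-shadows $x_0,\dots,x_N$; as $N$ is arbitrary, Remark~\ref{lem:finite_shadowing} yields that $f$ has shadowing. (Alternatively, one shadows an infinite pseudo-orbit directly by taking a convergent subsequence of the points $y_0$ produced for longer and longer truncations and passing to the limit in the estimate $d(f^n(\cdot),x_n)<\eps$.)
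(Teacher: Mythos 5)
Your proof is correct: the uniform openness constant $\rho$ is obtained by a valid compactness argument, the expansion hypothesis makes the local inverse branches well defined and $\mu^{-1}$-contracting, and the backward induction with the geometric-series bound $a_n<\delta/(\mu-1)<\eps$ (together with $\delta\mu/(\mu-1)<\rho$ keeping each pullback legal) yields finite shadowing, which Remark~\ref{lem:finite_shadowing} upgrades to shadowing. The paper itself gives no proof, quoting the statement as Corollary~3.2.4 of Przytycki--Urba\'nski, and your argument is essentially the standard one found there (uniformly contracting inverse branches obtained from openness plus expansion), so there is nothing to add.
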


We get the following easy corollary from Proposition \ref{thm:LimS} (1), Remark \ref{rem:exp_equiv} and Lemma \ref{lem:Urban_lemma}:

\begin{col}\label{col:uniexp_open_hshad}
Let $(X,d)$ be a compact metric space, and $f:X\rightarrow X$ be continuous. If $f$ is open and expanding then
\begin{enumerate}
    \item $f$ is topologically hyperbolic;
    \item $f$ has h-shadowing.
\end{enumerate}
\end{col}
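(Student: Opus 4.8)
The statement is a short corollary of the three results it cites, so the plan is simply to chain them. First, since $f$ is open and expanding on all of $X$, Lemma~\ref{lem:Urban_lemma} applies verbatim and gives that $f$ has shadowing.

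Next I would verify that $f$ is $c$-expansive, for which it suffices to see that $f$ is positively expansive (positively expansive maps being $c$-expansive). Let $\delta>0,\ \mu>1$ be the constants witnessing that $f$ is expanding, and take $b=\delta$ as a candidate expansive constant. If $x,y\in X$ satisfy $d(f^n(x),f^n(y))<b$ for all $n\ge 0$, then at each stage the expanding inequality is available along the orbits, so $d(f^{n}(x),f^{n}(y))\ge \mu^{n}d(x,y)$ for every $n\ge 0$; since the left-hand side stays bounded by $b$ while $\mu^{n}\to\infty$, we are forced to have $d(x,y)=0$, i.e. $x=y$. (Alternatively one can quote Remark~\ref{rem:exp_equiv}.) Hence $f$ is positively expansive, and therefore $c$-expansive; together with shadowing this is exactly the definition of topologically hyperbolic, which proves~(1).

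Finally, (2) follows from Proposition~\ref{thm:LimS}(1): $f$ is positively expansive, so it has shadowing if and only if it has h-shadowing, and shadowing has just been established. Hence $f$ has h-shadowing. The whole argument is bookkeeping; the only step deserving a moment's care is the passage from ``expanding'' to ``positively expansive'', where one must observe that the expansion estimate is legitimately applied all along the full forward orbits of $x$ and $y$ — which is fine here precisely because those orbits remain inside the set ($X$ itself) on which $f$ is assumed expanding. I do not anticipate a genuine obstacle.
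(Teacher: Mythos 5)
Your argument is correct and follows exactly the route the paper intends: Lemma~\ref{lem:Urban_lemma} gives shadowing, the expanding-implies-positively-expansive (hence $c$-expansive) observation is precisely Remark~\ref{rem:exp_equiv}, and Theorem~\ref{thm:LimS}(1) then upgrades shadowing to h-shadowing. The paper states the corollary as an immediate consequence of these three results without further proof, so your write-up (including the $\mu^n d(x,y)\le b$ estimate) just supplies the same bookkeeping in detail.
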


%------------------------------------------------------------------------------------------------------

\section{Proof of Theorem \ref{thm:main_ICT_omega}}\label{sec:limsets}

In this section we prove our main theorem. To complete the theory we require a property introduced in \cite{Balibrea} (Definition \ref{def:dyn_ind}), which seems closely linked to shadowing but better approximates the dynamics of maps on their $\omega$-limit sets (recall that a set $\Lambda\subset X$ is said to be \textit{regularly closed} if $\Lambda=\ol{\Int \Lambda}$).

\begin{defn}\label{def:dyn_ind}
For a compact metric space $X$ and a continuous map $f\colon X\rightarrow X$ we say that a set $\Lambda\subseteq X$ is \textit{dynamically indecomposable} if for every $\varepsilon>0$, every pair of points $x,y\in\Lambda$ and every pair of open sets $U,V$
such that $x\in U$ and $y\in V$ there is $m>0$ and a sequence of regularly closed sets $J_0,J_1,\ldots,J_m$ for which
\begin{enumerate}
    \item $x\in \Int J_0,\ J_0\subseteq U$,
    \item $J_{i+1}\subseteq f(J_i)$ for $i=0,1,\ldots,m-1$,
    \item $J_i\subseteq B_{\varepsilon}(\Lambda)$ for $i=0,1,\ldots,m$,
    \item $y\in \Int(J_m),\ J_m\subseteq V$.
\end{enumerate}
\end{defn}

Next we present Lemmas \ref{CINEP2} and \ref{thm:idecomp}, which relate dynamical indecomposability to shadowing and $\omega$-limit sets.

\begin{lem}\label{CINEP2}
Let $f\colon X\ra X$ be a continuous map acting on a compact metric space $(X,d)$. If $\Lambda\subseteq X$ is internally chain transitive, $f$ has h-shadowing on $\Lambda$ and is open on a neighbourhood of $\Lambda$, then $\Lambda$ is dynamically indecomposable.
\end{lem}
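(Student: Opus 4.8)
The plan is to convert a pseudo-orbit joining $x$ to $y$ inside $\Lambda$ into a genuine orbit segment via h-shadowing, and then to thicken the points of that orbit segment into a chain of regularly closed sets using openness of $f$ near $\Lambda$, exploiting crucially that h-shadowing gives an \emph{exact} hit at the final point.

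First I would fix $\varepsilon>0$, points $x,y\in\Lambda$ and open sets $U\ni x$, $V\ni y$, and choose an open $W\supseteq\Lambda$ on which $f$ is open together with $\varepsilon_0>0$ such that $B_{\varepsilon_0}(\Lambda)\subseteq W$; since $W$ is open, $f$ carries open subsets of $W$ to open subsets of $X$. Pick $\varepsilon'>0$ with $2\varepsilon'<\min\{\varepsilon,\varepsilon_0\}$ and $\overline{B_{2\varepsilon'}(x)}\subseteq U$, let $\delta>0$ be the h-shadowing constant on $\Lambda$ for $\varepsilon'$, and use internal chain transitivity to obtain a $\delta$-pseudo-orbit $x=x_0,x_1,\dots,x_m=y$ in $\Lambda$ with $m\ge1$. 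By h-shadowing there is $z\in X$ with $d(f^i(z),x_i)<\varepsilon'$ for $0\le i<m$ and $f^m(z)=x_m=y$; in particular $d(z,x)<\varepsilon'$ and $f^i(z)\in B_{\varepsilon'}(\Lambda)$ for $i<m$.

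Next I would construct $J_0,\dots,J_m$ as closures of small open balls by a finite forward induction. Set $J_0=\overline{B_{r_0}(z)}$ with $d(x,z)<r_0<\varepsilon'$; then $x\in\Int J_0$, $J_0\subseteq\overline{B_{2\varepsilon'}(x)}\subseteq U$, and $J_0\subseteq B_\varepsilon(\Lambda)$, while $B_{r_0}(z)\subseteq B_{2\varepsilon'}(\Lambda)\subseteq W$. Given $J_i=\overline{B_{r_i}(f^i(z))}$ with $r_i<\varepsilon'$ and $i\le m-1$, the open ball $B_{r_i}(f^i(z))$ lies in $W$, so $f(B_{r_i}(f^i(z)))$ is open and contains $f^{i+1}(z)$; choose $r_{i+1}>0$ so small that $\overline{B_{r_{i+1}}(f^{i+1}(z))}\subseteq f(B_{r_i}(f^i(z)))\subseteq f(J_i)$, imposing additionally $r_{i+1}<\varepsilon'$ if $i+1<m$, and $r_m<\varepsilon$ together with $\overline{B_{r_m}(y)}\subseteq V$ if $i+1=m$ (recalling $f^m(z)=y$), and set $J_{i+1}=\overline{B_{r_{i+1}}(f^{i+1}(z))}$. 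Each $J_i$ is the closure of a nonempty open ball, hence regularly closed: the open ball is contained in $\Int J_i$, so $J_i=\overline{B_{r_i}(\cdot)}\subseteq\overline{\Int J_i}\subseteq J_i$. Conditions (1)--(4) of Definition~\ref{def:dyn_ind}, with $m>0$, are then precisely what the construction yields.

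The verification is mostly bookkeeping; the point that really matters is the exact hit $f^m(z)=y$ furnished by h-shadowing, which is what lets $f(J_{m-1})$ contain a whole neighbourhood of $y$, so that $J_m$ can be chosen with $y\in\Int J_m$ and $J_m\subseteq V\cap f(J_{m-1})$. A minor pitfall worth noting is that closed balls need not be regularly closed in an arbitrary compact metric space, which is why the $J_i$ are taken to be closures of open balls rather than closed balls.
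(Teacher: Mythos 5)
Your proof is correct, and it follows the same overall strategy as the paper: use internal chain transitivity to get a $\delta$-pseudo-orbit in $\Lambda$ from $x$ to $y$, apply h-shadowing to obtain a point $z$ whose orbit $\varepsilon'$-traces it with the exact hit $f^m(z)=y$, and then use openness of $f$ on a neighbourhood of $\Lambda$ to thicken the orbit segment into the required chain of regularly closed sets. The only real difference is in how the sets $J_i$ are built. The paper takes $J_0=\overline{B_{\varepsilon'/2}(x_0)}$ and pushes forward, setting $J_{i+1}=\overline{f(J_i)\cap B_{\varepsilon'/2}(x_{i+1})}$ (balls of a fixed radius centred at the pseudo-orbit points), and then invokes openness both to see that each $J_i$ is regularly closed with $f^i(z)\in\Int(J_i)$ and to get a uniform radius $r$ with $B_r(f^{i+1}(z))\subseteq f(\Int J_i)$, which yields $y\in\Int(J_m)$. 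You instead centre shrinking closed balls at the true orbit points $f^i(z)$, choosing each radius so that $\overline{B_{r_{i+1}}(f^{i+1}(z))}\subseteq f(B_{r_i}(f^i(z)))$; this makes regular closedness immediate (closures of open balls) and makes the inclusion $J_{i+1}\subseteq f(J_i)$ and the condition $y\in\Int(J_m)$, $J_m\subseteq V$ automatic from the choice of $r_m$, at the cost of a slightly more careful bookkeeping of radii ($r_i<\varepsilon'$, $2\varepsilon'<\min\{\varepsilon,\varepsilon_0\}$) to keep each ball inside the open set $W$ where $f$ is open and inside $B_\varepsilon(\Lambda)$. Your remark that one should take closures of open balls rather than closed balls (which need not be regularly closed) is a good catch, and your derivation that openness on $W$ implies $f$ maps open subsets of $W$ to open sets matches the paper's own observation about its local definition of openness.
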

\begin{proof}
Let $\varepsilon>0$, pick $x,y\in\Lambda$ and let $U$ and $V$ be open with $x\in U$ and $y\in V$. Certainly there is an $\eta>0$ for which
$B_{\eta}(x)\subseteq U$ and $B_{\eta}(y)\subseteq V$. There is also $\xi$ such that $f$ is open on $B_\xi(\Lambda)$. Denote
$\varepsilon'=\min\set{\eta,\varepsilon,\xi/2}$. Let $\delta$ be provided for $\eps'/2$ by h-shadowing. By the assumptions $\Lambda$ is internally chain transitive, so there is a $\delta$-pseudo-orbit $\{x_0=x,x_1,\ldots,x_m=y\}\subset\Lambda$. Thus there is a $z\in X$ for
which $d(f^i(z),x_i)<\varepsilon'/2$ and $f^m(z)=x_m=y$.

So let $J_0=\cl{B_{\varepsilon'/2}(x_0)}$ and for $i=0,1,\ldots,m-1$, let
\[J_{i+1}=\cl{f(J_i)\cap B_{\varepsilon'/2}(x_{i+1})}\]

We claim that
\begin{itemize}
    \item $x\in \Int( J_0),\ J_0\subseteq U$;
    \item $J_{i+1}\subseteq f(J_i)$ for $i=0,1,\ldots,m-1$;
    \item $J_i\subseteq B_{\varepsilon}(\Lambda)$ for $i=0,1,\ldots,m$;
    \item $y\in \Int(J_m),\ J_m\subseteq V$.
\end{itemize}
Notice first that $J_i$ is regularly closed for every $i=0,1,\ldots,m$, since $f$ is open on each $J_i$ and $J_0$ is regularly closed. Moreover, the first condition holds by the definition of $J_0$, the second and third by the definition of the $J_i$.

Since $f$ is open at $f^i(z)\in B_\xi(\Lambda)$, $f^i(z)\in \Int(J_i)\neq\emptyset$, and there is $0<r<\eps'/2$ such that for $i=0,1,\ldots,m-1$
$$
f(\Int (J_i))\supset B_r(f^{i+1}(z))
$$
and in particular,
\[y=f^m(z)\in f(\Int (J_{m-1}))\cap B_r(x_m)\subset\Int(J_m).\]
This proves that the claim holds, and as an immediate consequence we see that $\Lambda$ is dynamically indecomposable.
\end{proof}

\begin{lem}\label{thm:idecomp}
Assume that $(X,d)$ is compact, $f\colon X \rightarrow X$ is continuous, and $\Lambda\subseteq X$ is a closed set which is dynamically indecomposable for $f$. Then $\Lambda=\omega(x_{\Lambda},f)$ for some $x_{\Lambda}\in X$.
\end{lem}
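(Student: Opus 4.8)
The plan is a standard $\omega$-limit set realisation argument. Assume $\Lambda\neq\emptyset$ (the conclusion requires this, since $\omega$-limit sets are nonempty). Fix a sequence $(w_k)_{k\geq 1}$ of points of $\Lambda$ whose underlying set is dense in $\Lambda$ and in which every point occurs infinitely often. The aim is to use dynamical indecomposability repeatedly to build an infinite sequence of nonempty regularly closed sets $(K_j)_{j\geq 0}$ with $K_{j+1}\subseteq f(K_j)$, together with indices $0=q_1<q_2<\cdots$, so that $K_j\subseteq B_{1/k}(\Lambda)$ whenever $q_{k-1}<j\leq q_k$, and $w_k\in\Int(K_{q_k})$ with $K_{q_k}\subseteq B_{1/k}(w_k)$ for every $k$. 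Any point $x_\Lambda$ with $f^j(x_\Lambda)\in K_j$ for all $j$ will then have $\omega(x_\Lambda,f)=\Lambda$.

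To build the chain, first apply Definition~\ref{def:dyn_ind} to the pair $w_1,w_1$ (with $\varepsilon=1$, $U=V=X$) and let $K_0$ be the first of the regularly closed sets it produces, so $w_1\in\Int(K_0)\subseteq K_0\subseteq B_1(\Lambda)$; put $q_1=0$. Given $K_{q_{k-1}}$ with $w_{k-1}\in\Int(K_{q_{k-1}})$, apply Definition~\ref{def:dyn_ind} to $w_{k-1},w_k$ with $\varepsilon=1/k$, $U=\Int(K_{q_{k-1}})$ and $V=B_{1/k}(w_k)$; this yields $m_k>0$ and regularly closed $L_0,\dots,L_{m_k}$ with $w_{k-1}\in\Int(L_0)$, $L_0\subseteq U$, $L_{i+1}\subseteq f(L_i)$, $L_i\subseteq B_{1/k}(\Lambda)$, and $w_k\in\Int(L_{m_k})\subseteq L_{m_k}\subseteq B_{1/k}(w_k)$. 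Discard $L_0$, set $K_{q_{k-1}+i}=L_i$ for $1\leq i\leq m_k$ and $q_k=q_{k-1}+m_k$. Since $L_0\subseteq U\subseteq K_{q_{k-1}}$ we get $K_{q_{k-1}+1}=L_1\subseteq f(L_0)\subseteq f(K_{q_{k-1}})$, and $K_{q_{k-1}+i}=L_i\subseteq f(L_{i-1})=f(K_{q_{k-1}+i-1})$ for $i\geq 2$, so the whole sequence satisfies $K_{j+1}\subseteq f(K_j)$. Each $K_j$ is nonempty (each $K_{q_k}$ has nonempty interior, and $K_{j+1}\subseteq f(K_j)$, read backwards, propagates nonemptiness), and $q_k\to\infty$ since $m_k\geq 1$.

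To extract the orbit, note that for each fixed $n$ the set $K_0\cap f^{-1}(K_1)\cap\cdots\cap f^{-n}(K_n)$ is nonempty: putting $D_n=K_n$ and $D_i=K_i\cap f^{-1}(D_{i+1})$, if $D_{i+1}\neq\emptyset$ then, since $D_{i+1}\subseteq K_{i+1}\subseteq f(K_i)$, any $w\in D_{i+1}$ has a preimage $u\in K_i$, whence $u\in D_i$; so $D_0\neq\emptyset$. Choose $y_n\in D_0$; then $f^i(y_n)\in K_i$ for $0\leq i\leq n$. As $K_0$ is compact, a subsequence $y_{n_\ell}\to x_\Lambda$, and continuity together with closedness of each $K_i$ gives $f^j(x_\Lambda)\in K_j$ for all $j\geq 0$. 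For $q_{k-1}<j\leq q_k$ this yields $d(f^j(x_\Lambda),\Lambda)<1/k$, which tends to $0$ as $j\to\infty$ (because $q_k\to\infty$); since $\Lambda$ is closed, $\omega(x_\Lambda,f)\subseteq\Lambda$. Conversely, a point $v$ of our dense set equals $w_k$ for infinitely many $k$, and for those $k$, $d(f^{q_k}(x_\Lambda),v)<1/k$ with $q_k\to\infty$, so $v\in\omega(x_\Lambda,f)$; hence $\Lambda=\overline{\{w_k:k\geq 1\}}\subseteq\omega(x_\Lambda,f)$, and we conclude $\omega(x_\Lambda,f)=\Lambda$.

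The argument is essentially bookkeeping. The one point that needs care is the grafting step: after discarding the initial set $L_0$ of each new block one must check $L_1\subseteq f(K_{q_{k-1}})$, so that the single sequence $(K_j)$ really does satisfy $K_{j+1}\subseteq f(K_j)$; together with the routine compactness extraction of a point whose orbit meets every $K_j$, this is the whole proof, and I anticipate no genuine obstacle.
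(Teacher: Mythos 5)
Your proposal is correct and follows essentially the same route as the paper: repeatedly apply dynamical indecomposability with the new initial set placed inside the interior of the previous terminal set, keep the sets within shrinking neighbourhoods of $\Lambda$ while returning arbitrarily close to a dense set of points, and then use compactness to extract a point whose orbit visits every set in the chain. The only differences are bookkeeping (a dense sequence with each point repeated infinitely often versus an enumeration of balls of all radii, and a convergent-subsequence extraction versus the paper's nested preimage sets $J^{(k)}$), which do not change the argument.
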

\begin{proof}
Since $\Lambda$ is compact, there is a sequence of points $\{z_n\ :\ n\in\nat\}$ in $\Lambda$ such that $\Lambda=\cl{\{z_n\}_{n\in\nat}}$. Enumerate the collection $\{B_{1/p}(z_n)\ :\ n,p\in\nat\}$ as $\{B_k\ :\ k\in\nat\}$, then for every $k$ there is an $n_k\in\nat$ such that $z_{n_k}\in B_k$. We define a sequence of natural numbers $\{m_n\ :\ n\in\nat\}$ and a sequence of regularly closed sets $\{J_k\ :\ k\in\nat\}$ as follows.
\begin{enumerate}
    \item Let $m_1=1$, let $J_{m_1}$ be the closure of any basic open subset of $B_1$ such that $z_{n_1}\in \Int J_{m_1}$
    \item Given $J_{m_i}$ such that $z_{n_i}\in \Int J_{m_i}$ consider the point $z_{n_{i+1}}\in B_{i+1}$. Since $\Lambda$ is dynamically indecomposable, we can define basic open sets $I_{m_i}^0$ and $\{I_j\ :\ m_i+1\leq j\leq m_{i+1}\}$ whose closures $J_{m_i}^0$ and $\{J_j\ :\ m_i+1\leq j\leq m_{i+1}\}$ respectively are contained in $B_{1/i}(\Lambda)$, and for which $z_{n_i}\in \Int J_{m_i}^0\subseteq \Int J_{m_i}$, $J_{m_i+1}\subseteq f(J_{m_i}^0)$, $J_{j+1}\subseteq f(J_j)$ for $j=m_i+1,\ldots,m_{i+1}-1$, and $z_{n_{i+1}}\in \Int J_{m_{i+1}}\subseteq B_{i+1}$.
\end{enumerate}
By the construction of the $J_k$'s, for every $k\in\nat$ there is a closed set $D\subseteq J_{k-1}$ such that $f(D)=J_k$. Hence, for every $k\in\nat$ there is a $J^{(k)}\subseteq J_0$ such that $f^k(J^{(k)})=J_{k}$. The $J^{(k)}$ are nested, so by compactness $K=\bigcap_{k\in\nat}J^{(k)}\neq\emptyset$.

For $x_{\Lambda}\in K$, $f^i(x_{\Lambda})\in J_i$ for every $i\in\nat$, so certainly $\Lambda\subset\omega(x_{\Lambda},f)$. Suppose that $z\in X\setminus\Lambda$, then there are disjoint open sets $U$ and $V$ for which $z\in U$ and $\Lambda\subseteq V$. Since $\bigcup\{J_j\ :\ m_i\leq j\leq m_{i+1}\}\subseteq B_{1/i}(\Lambda)$ there is an $N\in\nat$ for which $f^n(x_{\Lambda})\in V$ for every $n\geq N$, hence $z\notin\omega(x_{\Lambda},f)$. Thus $\Lambda=\omega(x_{\Lambda},f)$.
\end{proof}

\begin{rem}
Dynamical indecomposability is not a sufficient condition for shadowing of any type. Indeed an irrational rotation of the circle has neither shadowing nor limit shadowing, but it is easy to verify that it is dynamically indecomposable.
\end{rem}

We are now in a position to prove the main result in our paper, Theorem \ref{thm:main_ICT_omega}, which gives various cases in which $\omega$-limit sets are characterized by internal chain transitivity, and thus also weak incompressibility.

\begin{proof}[\textbf{Proof of Theorem \ref{thm:main_ICT_omega}}]
In every case, we get that the closed sets with weak incompressibility are precisely the closed sets with internal chain transitivity by Lemma \ref{WI=ICT}, and furthermore every $\omega$-limit set is internally chain transitive as was shown in \cite{Hirsch}. Thus to prove the theorem we show that a closed internally chain transitive set is necessarily an $\omega$-limit set in each case.

In case (\ref{ICT_omega_cond1}), we have that the closed, internally chain transitive set $Y$ is the $\omega$-limit set of an asymptotic pseudo-orbit $\{x_n\}_{n\in\nat}$ by Corollary~\ref{WI=ICT}, and by limit shadowing there is a point $x_{Y}\in X$ whose orbit asymptotically shadows $\{x_n\}_{n\in\nat}$. Thus $Y=\omega(x_{Y},f)$.

In case (\ref{ICT_omega_cond2}), notice that $f$ is $c$-expansive, so Lemma \ref{s-limit_and_limit_shad} and Theorem \ref{thm:LimS} $(2)$ imply that $f$ has limit shadowing on $Y$, so the proof follows as in case (\ref{ICT_omega_cond1}).

In case (\ref{ICT_omega_cond4}), notice that by Proposition \ref{prop:ICTinv} we have that $Y$ is invariant. Since $f$ is expanding and open on $Y$, by Corollary \ref{col:uniexp_open_hshad} we get that $f$ has h-shadowing on $Y$, so by Lemma \ref{CINEP2} $f$ is dynamically indecomposable, and by Lemma \ref{thm:idecomp} there is some $x_Y\in X$ such that $Y=\omega(x_{Y},f)$.

Case (\ref{ICT_omega_cond6}) follows directly from Lemmas \ref{CINEP2} and \ref{thm:idecomp}.
\end{proof}

\textbf{Corollary \ref{cor:equiv_tent}} applies Theorem \ref{thm:main_ICT_omega} to specific types of maps. Shifts of finite type are well-studied and definitions can be found in many texts, including \cite{Barwell, kazda}. We say that an interval map $f:[0,1]\rightarrow[0,1]$ is \textit{piecewise linear} if there is a set of points $\{c_0=0,c_1,\ldots,c_m=1\}$ such that $f$ is linear on $[c_{i-1},c_i]$ for $0<i\leq m$; a piecewise linear map $f$ is \textit{uniformly piecewise linear} if the gradient modulus of $f$ is everywhere greater than 1, and equal on each of the $m$ subintervals \cite{Chen}; an example of such a map is a tent map with slope $\lambda\in(1,2]$.

To see that Corollary \ref{cor:equiv_tent} holds, note first that all of these maps are open on the set $\Lambda$ as given. \ref{cor:equiv_tent} part (1) follows from a result in \cite{Chen} which shows that uniformly piecewise linear maps that take values 0 or 1 at local extrema have shadowing, and thus have h-shadowing as in Example \ref{eg:equiv_tent}. The result now follows from Theorem \ref{thm:main_ICT_omega} (\ref{ICT_omega_cond6}). \ref{cor:equiv_tent} parts (2) and (3) (which are equivalent symbolically) follow from Theorem \ref{thm:main_ICT_omega} (\ref{ICT_omega_cond4}) since the map is expanding on $\Lambda$ in each case.

\medskip

We end with two examples. The first shows that the characterization of $\w$-limit sets by internal chain transitivity is not hereditary.

\begin{eg}\label{eg:sofic_ICT}
Consider the sofic shift $X$ consisting of all bi-infinite words in $a$, $b$, $c$ and $d$ obtained by following paths in the following presentation.
\begin{center}
\VCDraw{%
\begin{VCPicture}{(0,-4)(8,0)}
\State{(2,-2)}{A}
\State{(6,-2)}{B}
\LoopNW{A}{a}
\LoopSW{A}{b}
\LoopSE{B}{c}
\LoopNE{B}{a}
\LArcL{A}{B}{d}
\LArcL{B}{A}{d}
\end{VCPicture}}
\end{center}
The set
$$
\Lambda=\set{a,b}^\Z \cup \set{a,c}^\Z
$$
is closed and shift invariant. Both sets $\set{a,b}^\Z$, $\set{a,c}^\Z$ are internally chain transitive with nonempty intersection, so $\Lambda$ is also internally chain transitive. But $\Lambda$ is not $\omega$-limit set of any point under $\restr{\sigma}{X}$, since any point $x\in X$ such that $\omega(x,\restr{\sigma}{X})\supset \Lambda$ must contain infinitely many symbols $d$ and so there must be a point in $\omega(x,\restr{\sigma}{X})$ which contain $d$ on at least one position. On the other hand $\Lambda$ is the $\w$-limit set of a point in the full shift on $\{a,b,c,d\}$.
\andysq
\end{eg}

The second example shows that there is no general characterization of $\w$-limits sets in terms of internal chain transitivity together with even very strong mixing properties.

\begin{eg} \label{exact_map}
Consider the function $f\colon [-2,2]\to [-2,2]$, whose graph is the piecewise linear curve passing through the points $(-2,2)$, $(-3/2,-2)$, $(-1,0)$, $(-1/2,-2)$, $(1/2,2)$, $(1,0)$, $(3/2,2)$ and $(2,-2)$ (see Figure \ref{exact_map_graph}).  Note that the absolute value of the gradient (at non-critical points) of the function is at least 4. Note also that the function $f$ is topologically exact (locally eventually onto), because if $U$ is any open interval then clearly for some $n>0$, $f^n(U)$ will contain two consecutive critical points, from which it follows that $f^{n+2}(U)=[-2,2]$.

Let $H_{-}=\{0\}\cup\{-1/4^{-n}: n\geq0\}$ and $H_{+}=\{0\}\cup\{1/4^{-n}: n\geq0\}$. Clearly $H_-$ and $H_+$ are both closed, invariant and internally chain transitive sets, because $f(\pm1)=0$ and $f(\pm1/4^{n+1})=\pm1/4^n$. Since $H_-\cap H_+\neq\nowt$, the union $H=H_-\cup H_+$ is also, therefore, closed, invariant and internally chain transitive. However, $H$ is not the $\w$-limit set of any point. To see this we argue as follows. Suppose that $H=\omega(x,f)$ for some $x\in[-2,2]$, and notice that $f\big([0,7/4]\big)=[0,2]$, whilst $f\big([7/4,2]\big)=[-2,0]$. Then since the orbit of $x$ must approach both $H_+$ and $H_-$, for infinitely many $n\in\nat$ we have $f^n(x)\in(0,2]$ and $f^{n+1}(x)\in[-2,0)$, thus we must have infinitely many $n\in\nat$ for which $f^n(x)\in(7/4,2]$, which is disjoint from $H$. But this would mean there is a point $x\in \omega(x,f)\cap[7/4,2]$, and thus $H\neq\omega(x,f)$.
\andysq
\end{eg}

\begin{figure}[ht]
\newrgbcolor{cccccc}{0.9 0.9 0.9}
\psset{xunit=1.5cm,yunit=1.5cm,algebraic=true,dotstyle=*,dotsize=3pt 0,linewidth=0.8pt,arrowsize=3pt 2,arrowinset=0.25}
\begin{pspicture*}(-3.3,-2.5)(3.0,3.0)
\psline[linecolor=cccccc, linewidth=0.9pt](-2,2)(-2,-2)
\psline[linecolor=cccccc, linewidth=0.9pt](2,-2)(2,2)
\psline[linecolor=cccccc, linewidth=0.9pt](2,2)(-2,2)
\psline[linecolor=cccccc, linewidth=0.9pt](-2,-2)(2,-2)
\psline[linecolor=cccccc, linewidth=0.9pt](-2,1)(2,1)
\psline[linecolor=cccccc, linewidth=0.9pt](-2,-1)(2,-1)
\psline[linecolor=cccccc, linewidth=0.9pt](-1,-2)(-1,2)
\psline[linecolor=cccccc, linewidth=0.9pt](1,-2)(1,2)
\psline[linecolor=cccccc, linewidth=0.9pt](-2,-2)(2,2)
\psaxes[xAxis=true,yAxis=true,Dx=1,Dy=1,ticksize=-2pt 0,subticks=2]{->}(0,0)(-2.5,-2.5)(2.5,2.5)
\psline[linewidth=1.1pt](-0.5,-2)(0.5,2)
\psline[linewidth=1.1pt](0.5,2)(1,0)
\psline[linewidth=1.1pt](1,0)(1.5,2)
\psline[linewidth=1.1pt](1.5,2)(2,-2)
\psline[linewidth=1.1pt](-0.5,-2)(-1,0)
\psline[linewidth=1.1pt](-1,0)(-1.5,-2)
\psline[linewidth=1.1pt](-1.5,-2)(-2,2)
%\psline{->}(-2,0)(2.5,0)
%\psline{->}(0,-2)(0,2.5)
%arrowed lines for axes, not nec.
\psline[linestyle=dotted, dotsep=1pt](0.06,0.25)(0.06,0)
\psline[linestyle=dotted, dotsep=1pt](0.06,0.06)(0.02,0.06)
\psline[linestyle=dotted, dotsep=1pt](-0.06,-0.25)(-0.06,0)
\psline[linestyle=dotted, dotsep=1pt](0.25,1)(0.25,0)
\psline[linestyle=dotted, dotsep=1pt](0.25,1)(1,1)
\psline[linestyle=dotted, dotsep=1pt](1,1)(1,0)
\psline[linestyle=dotted, dotsep=1pt](-1,-1)(-1,0)
\psline[linestyle=dotted, dotsep=1pt](-0.25,0)(-0.25,-1)
\psline[linestyle=dotted, dotsep=1pt](-0.25,-1)(-1,-1)
\psline[linestyle=dotted, dotsep=1pt](-0.25,-0.25)(-0.06,-0.25)
\psline[linestyle=dotted, dotsep=1pt](0.25,0.25)(0.06,0.25)
\psline[linestyle=dotted, dotsep=1pt](-0.06,-0.06)(-0.02,-0.06)
\end{pspicture*}
\caption{The graph of the function $f$ from Example \ref{exact_map}}\label{exact_map_graph}
\end{figure}
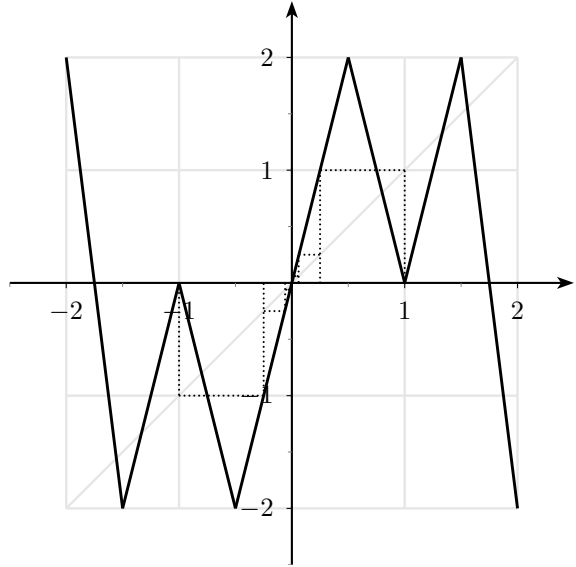

%----------------------------------------------------------------------------------------------------------------

\section*{Acknowledgements}

The authors gratefully acknowledge the useful comments made by Henk
Bruin during early drafts of this paper.

Good received a London Mathematical Society
Collaborative Small Grant to fund Oprocha's visit to Birmingham to
work on results in this paper.
The research of Oprocha leading to results included in this paper was supported by the Marie Curie European Reintegration Grant of the European Commission under grant agreement no. PERG08-GA-2010-272297. He was also supported by the Polish Ministry of Science and Higher Education (2011).
Raines was supported by NSF grant 0604958.
The financial support of these institutions is hereby gratefully acknowledged.

The graphs of interval maps were drawn and exported to PSTricks using the free dynamics mathematics software GeoGebra. The representation of the sofic shift was generated using VauCanSon-G \LaTeX\ package.

\bibliographystyle{plain}
\bibliography{BibtexW-limitsets}

\end{document}